\newcommand{\wt}{\widetilde}
\newcommand{\mb}{\mathbb}
\newcommand{\mc}{\mathcal}
\newcommand{\tu}{\textup}
\newcommand{\ol}{\overline}
\newcommand{\tb}{\textbf}
\newcommand{\wh}{\widehat}
\newcommand{\wlC}{\widehat{{\mb{C}}}}
\newcommand{\HHH}{\mathcal{H}}
\newcommand{\G}{\Gamma}
\newcommand{\UUU}{\mathcal{U}}
\newtheorem{thm}{Theorem}[section]
\newtheorem{lem}[thm]{Lemma}
\newtheorem{prop}[thm]{Proposition}
\newtheorem{fact}[thm]{Fact}
\titleformat{\section}{\centering\normalsize}{\textsc{\thesection.}}{1em}{\textsc}
\titleformat{\subsection}{\normalsize}{\thesubsection.}{1em}{\textbf}
\titleformat{\subsubsection}{\normalsize}{\thesubsubsection.}{1em}{\textbf}
\title{On rational maps with buried critical points}
\author{Yan Gao}
\address{Yan Gao, School of Mathematics, Sichuan University, Chengdu 610064, China}
\email{gyan@scu.edu.cn}
\author{Luxian Yang}
\address{Luxian Yang, School of Mathematical Sciences, Zhejiang University, Hangzhou, 310027, China}
\email{yang\underline{ }luxian@163.com}
\author{Jinsong Zeng}
\address{Jinsong Zeng, School of Mathematics and Information Science, Guangzhou University, Guangzhou 510006, China}
\email{jinsongzeng@163.com}
\begin{document}
\maketitle
\begin{abstract}

In this paper, we construct geometrically finite rational maps with buried critical points on the boundaries of some hyperbolic components by using the pinching and plumbing deformations.

\end{abstract}
%\englishkeywords{dynamics of rational maps, hyperbolic components, buried critical points}% ¨¹

\section{Introduction}
Let ${\rm Rat}_d, d\geq 2,$ denote the space of rational maps of degree $d$ on the Riemann sphere $\wh{\mb{C}}$, with the topology that $f_n\to f$ if and only if $f_n$ uniformly converges to $f$ with respect to the spherical metric on $\wlC$. A
rational map is called \emph{hyperbolic} if all its critical points converge to attracting cycles under iterations.
The collection of hyperbolic rational maps forms an open subset of ${\rm Rat}_d$, whose connected components are called \emph{hyperbolic components}. A central conjecture in complex dynamics is

\noindent\emph{{\bf Hyperbolic Conjecture}: The hyperbolic rational maps are dense in ${\rm Rat}_d$.}

A related interesting question is to study the boundaries of hyperbolic components. In particular, one may ask what kinds of maps possibly lie on the boundary of a hyperbolic component?

Let $f$ be a rational map. A point $z\in\wlC$ is called \emph{buried} in the Julia set $\mathcal{J}_f$ if $z\in\mathcal{J}_f$ and $z$ is not on the boundary of any Fatou domain of $f$. In this paper, we provide a way to create rational maps with buried critical points on the boundaries of some hyperbolic components.

A rational map is called \emph{geometrically finite} if the orbit of every critical point is either finite or converges to an attracting or parabolic cycle. A geometrically finite rational map is \emph{subhyperbolic} if it has no parabolic cycles. We say a set (or a point) $E$ is \emph{preperiodic} under a map $f$ if and only if $f^{n+p}(E)=f^n(E)$ for some minimal integers $p\geq 1$ and $n\geq 1$.

\begin{thm}\label{maintheorem}
Let $\HHH$ be a hyperbolic component in ${\rm Rat}_d, d\geq 2$, which contains a map $R$ satisfying
\begin{itemize}
  \item the Julia set of $R$ is a Sierpi\'nski carpet;
  \item a preperiodic Fatou domain of $R$ contains a critical point.
\end{itemize}
Then the boundary of $\HHH$ contains a geometrically finite map with a buried critical point.
\end{thm}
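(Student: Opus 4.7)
The plan is to apply the pinching deformation to $R$ along a carefully chosen $R$-invariant multicurve in the Fatou set, producing a geometrically finite map on $\partial \HHH$. The Sierpi\'nski carpet structure of $\mathcal{J}_R$---which ensures that all Fatou components of $R$ are Jordan disks with pairwise disjoint closures---provides the combinatorial rigidity required for the construction.

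Let $V$ denote the periodic Fatou component in the attracting cycle of $R$ with $R^m(U) = V$ for some $m \geq 1$, where $U$ is the preperiodic Fatou component containing the critical point $c$. First I would construct an $R$-invariant multicurve $\Gamma$ by choosing a simple closed curve $\alpha \subset V$ separating the attracting periodic point from the critical value $R^m(c)$, together with suitable preimages of $\alpha$ under iterates of $R$ (including a preimage in $U$). After verifying that $\Gamma$ satisfies the pinching criterion (using the Sierpi\'nski carpet combinatorics to guarantee the required finiteness up to homotopy and the non-peripherality with respect to the postcritical set), I would apply the pinching theorem to obtain a continuous path $(R_t)_{t \in [0,1)} \subset \HHH$ converging to a geometrically finite rational map $R^* \in \partial\HHH$ with parabolic cycles at the pinched images.

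The main obstacle is to arrange for $R^*$, or a variant of it reached by further deformation within $\overline{\HHH}$, to carry a \emph{buried} critical point. Pure pinching is insufficient: since the pinching semi-conjugacy is a local homeomorphism outside the pinched curves, a critical point of $R$ lying in the interior of its Fatou component descends to a critical point of $R^*$ still in a Fatou component; and a critical point that happens to lie on a pinched curve descends to a pinch point, which sits on the boundary of a parabolic basin---neither configuration is buried. To overcome this, I would combine the pinching with a plumbing deformation along a judiciously chosen direction, rerouting the critical orbit so that a critical point of the final map lands on a buried (repelling) periodic cycle of the Julia set; such cycles are abundant in Sierpi\'nski-carpet Julia sets. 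The hardest technical step will be to carry out this combined pinching-plumbing operation rigorously, ensuring the resulting map stays on $\partial\HHH$ (rather than crossing into a different hyperbolic component or diverging in $\mathrm{Rat}_d$) and that the critical point is genuinely buried, i.e.\ not on the boundary of any Fatou component. This requires a detailed combinatorial analysis exploiting the rigidity of $\mathcal{J}_R$ and a careful bookkeeping of how the Fatou/Julia partition evolves under the composite deformation.
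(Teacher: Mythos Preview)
Your proposal has the right overall intuition—pinching followed by plumbing—but it misidentifies both the objects to be pinched and the mechanism that actually produces a buried critical point.

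First, the pinching theory of Cui--Tan used here is not along multicurves of closed curves but along \emph{admissible families of open arcs}: each arc lies in a periodic attracting basin and joins the attracting periodic point to a repelling periodic point on the boundary. A closed curve separating the attracting point from the critical value is not the relevant object; pinching such a curve does not fit the framework of the theorems you would need to invoke, and in particular does not produce the parabolic cycle in the required way. In the paper's construction the arcs are chosen to \emph{contain} the forward orbit of the critical point $c_R$, so that after pinching the critical point $c_g$ is pushed into the Julia set and its orbit lands on the newly created parabolic cycle.

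Second, and more seriously, your proposed burying mechanism is not the one that works. Plumbing does not ``reroute'' a critical orbit onto a pre-existing buried repelling cycle; it merely splits a parabolic cycle into an attracting cycle and a repelling cycle while preserving the topological dynamics on the Julia set. After plumbing one obtains a subhyperbolic map $f$ with $c_f\in\mathcal J_f$ eventually landing on a repelling point $w_f$, but $w_f$ still lies on the boundary of a Fatou domain $U_{f,0}$ and is \emph{not} buried. The actual burying is achieved by a \emph{second} pinching of $f$, along a new admissible family of arcs in $U_{f,0}$ disjoint from $\mathrm{post}(f)$ and landing at a different repelling cycle of period $2p$. This second pinching fragments $U_{f,0}$ into infinitely many Fatou domains of the limit $f_\infty$, none of whose closures contain $w_{f_\infty}$; that is what makes $w_{f_\infty}$, and hence $c_{f_\infty}$, buried. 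Your sketch lacks this step entirely.

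Finally, you correctly flag that keeping the deformed maps on $\partial\HHH$ is the delicate point, but you offer no approach. The paper handles this (Proposition~\ref{prop:boundary}) by perturbing $f$ to a quasi-regular map $F_r$, showing via an explicit surgery and a c-equivalence argument (Lemmas~\ref{lem:isotopy}, \ref{lem:isotopy-annulus}, \ref{lem:same-component}) that $F_r$ is c-equivalent to some $R_r\in\HHH$, and then letting $r\to 0$. This is where the Sierpi\'nski carpet hypothesis is genuinely used, to control the combinatorics of how Fatou boundaries meet under the semiconjugacies $\phi_\infty,\psi_\infty$.
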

It is known that the Julia set of a hyperbolic rational map is a Sierpi\'{n}ski carpet if and only if the boundaries of all the Fatou domains are Jordan curves and each pair of them are disjoint. There are many examples of hyperbolic rational maps whose Julia sets are Sierpi\'{n}ski carpets; see \cite{Dev13,DFGJ14,Mil93} etc. Conjecturally, the hyperbolic components containing those maps are relatively compact in $\tu{Rat}_d$ \cite[Question 5.3]{McM94} (established in degree two for maps which have two distinct cycles of Fatou domains of period at least two; see \cite{Eps00}).

In the case $d=2$, many hyperbolic components of $\tu{Rat}_2$ satisfy the conditions in Theorem \ref{maintheorem}; see \cite[Theorem A]{DFGJ14}.

The main tools used to prove Theorem \ref{maintheorem} are the pinching and plumbing deformations developed in \cite{CT18}. 

The outline of the proof is as follows. Let $R_0\in\HHH$ and $c_{R_0}$ be a critical point in a preperiodic Fatou domain. Starting from $R_0$, we find a pinching path $R_t, t\geq 0$ in $\HHH$ for which the distance between $c_{R_t}$ and $\mc{J}_{R_t}$ converges to 0 as $t$ tends to $\infty$, where $c_{R_t}$ is the critical point of $R_t$ deformed from $c_{R_0}$. The limit map $R_\infty:=\lim_{t\to\infty}R_t\in\partial \HHH$ has a unique parabolic cycle, and the limit $c_{R_\infty}:=\lim_{t\to\infty}{c_{R_t}}$ is a critical point of $R_\infty$ eventually falling into the parabolic cycle.  Then applying the plumbing surgery, we get another pinching path $g_t$ starting from a subhyperbolic map $g_0$ and terminating at $g_\infty=R_\infty$, satisfying that the actions of $g_t$ and $g_\infty$ on their Julia sets are topologically conjugate. This plumbing deformation splits the unique parabolic cycle of $g_\infty$ into an attracting cycle and a repelling cycle. Finally, we choose a pinching path $f_t$ starting from $f_0:=g_0$ to create a buried critical point for the limit map $f_\infty:=\lim_{t\to\infty}f_t$. Actually, the buried critical point is $c_{f_\infty}:=\lim_{t\to\infty}c_{f_t}$. The key point is to show that the two pinching paths $g_t$ and $f_t$ always stay on the boundary of $\HHH$.
\vskip 0.3cm
\emph{Acknowledgment} We sincerely thank Guizhen Cui and Yongcheng Yin for helpful discussions. The research is supported by the grants no.11801106 and no.11871354 of NSFC.

\section{Preliminary}
\subsection{Semi-rational maps and c-equivalent}
Let $f:\wh{\mb{C}}\to \wh{\mb{C}}$ be a branched covering. The \emph{orbit} of a set $E\subseteq\wh{\mb{C}}$ under $f$, denoted by $\tu{orb}(E)$, is the set $\cup_{i\geq 0}f^i(E)$. The \emph{postcritical set} $\tu{post}(f)$ of $f$ is defined as the closure of $\tu{orb}(\{\tu{all ciritival values of }f\})$. Following \cite{CT11}, $f$ is called \emph{semi-rational} if
\begin{itemize}
	\item[$\bullet$] the accumulation set of $\tu{post}(f)$, denoted by $\tu{post}(f)'$, is finite;
	\item[$\bullet$] $f$ is holomorphic in a neighborhood of $\tu{post}(f)'$;
	\item[$\bullet$] every cycle in $\tu{post}(f)'$ is attracting.
\end{itemize}
An open set $\mc{W}$ is said to be a \emph{fundamental set} of $f$ if $\mc{W}\subseteq f^{-1}(\mc{W})$ and $\mc{W}$ contains every attracting cycle in $\tu{post}(f)'$.

Two continuous maps $\phi$ and $\psi$  from a set $X \subseteq \wh{\mb{C}}$ to a set $Y\subseteq\wh{\mb{C}}$ are said to be \emph{homotopic} rel. a subset $A$ (maybe empty) of $X$ if there exists a continuous map $H:X\times [0,1]\to Y$ such that
$$H(x,0)=\phi(x), H(x,1)=\psi(x)\,\, \forall x\in X\tu{ and }H(x,t)=\phi(x)\,\, \forall x\in A~~ \forall t\in[0,1];$$
and \emph{isotopic} rel.\,$A$ if the map $H|_{X\times\{t\}}:X\to Y$ is a homeomorphism for each $t\in[0,1]$. In particular, a homotopy $H$ rel. $A$ is called a \emph{pseudo-isotopy} rel.\,$A$ if $H|_{X\times \{t\}}$ is a homeomorphism for each $t\in[0,1)$; and in this case the map $H|_{X\times{\{1\}}}$ is said to be the \emph{end} of the pseudo-isotopy $H$.

Two semi-rational maps $f_1$ and $f_2$ are called $\emph{c-equivalent}$, if there is a pair $(\phi, \psi)$ of homeomorphisms of $\wh{\mb{C}}$ and a fundamental set $\mc{W}$ of $f_1$ such that
\begin{itemize}
	\item $\phi\circ f_1=f_2\circ\psi$ on $\wlC$;
	\item $\phi$ is holomorphic in $\mc{W}$;
	\item $\phi$ and $\psi$ are isotopic relative to $\tu{post}(f_1)\cup \ol{\mc{W}}$.
\end{itemize}

The following is the rigidity part of Thurston Theorem; see \cite{CT11,DH93} for details.
\begin{thm}\label{theorem:thurston}
If two subhyperbolic rational maps are c-equivalent, then they are conformal conjugate.
\end{thm}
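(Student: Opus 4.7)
The plan is to adapt the Thurston pullback argument to the c-equivalence setting. Denote by $(\phi_0,\psi_0):=(\phi,\psi)$ the pair of homeomorphisms given by the c-equivalence, with $\phi_0\circ f_1=f_2\circ\psi_0$, $\phi_0$ holomorphic on a fundamental set $\mc{W}$ of $f_1$, and $\phi_0$ isotopic to $\psi_0$ rel $P_0:=\tu{post}(f_1)\cup\ol{\mc{W}}$. The target is a Möbius transformation $\Phi$ with $\Phi\circ f_1\circ\Phi^{-1}=f_2$.

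First I would build inductively a sequence of homeomorphism pairs $(\phi_n,\psi_n)$ of $\wlC$ satisfying $\phi_n\circ f_1=f_2\circ\psi_n$, with $\psi_n$ isotopic to $\phi_{n-1}$ rel $P_n:=f_1^{-n}(P_0)$ and with $\phi_n$ holomorphic on the enlarged fundamental set $\mc{W}_n:=f_1^{-n}(\mc{W})$. Each $\psi_n$ arises from the standard branched-cover lifting of $\phi_{n-1}\circ f_1$ through $f_2$, valid because the semi-conjugacy equation forces $\phi_{n-1}$ to match $f_1$-critical values to $f_2$-critical values; the representative $\phi_n$ in the isotopy class of $\psi_n$ is obtained by pulling back the conformal structure from $\mc{W}_{n-1}$ via $\phi_{n-1}$ and extending as a homeomorphism.

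Next I would show that, after a suitable Möbius normalization, $\phi_n$ converges uniformly to a quasi-conformal map $\Phi:\wlC\to\wlC$ satisfying $\Phi\circ f_1=f_2\circ\Phi$. The essential input is that $f_1$, being subhyperbolic, carries an orbifold metric in a neighborhood of $\mc{J}_{f_1}$ on which $f_1$ is uniformly expanding. This expansion translates into a strict contraction of the Thurston pullback map on the Teichmüller space $\tu{Teich}(\wlC,P_0)$, giving a uniform bound on the Beltrami coefficients $\mu_{\phi_n}$, equicontinuity of the sequence, and geometric convergence of the Teichmüller distances between successive iterates, whence a unique quasi-conformal limit $\Phi$ satisfying the required semi-conjugacy.

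Finally, since $\phi_n$ is holomorphic on $\mc{W}_n$ and $\bigcup_n \mc{W}_n$ exhausts the Fatou set $\mc{F}_{f_1}$, the limit $\Phi$ is conformal on $\mc{F}_{f_1}$, so $\mu_\Phi$ is supported on $\mc{J}_{f_1}$ and, by the conjugacy relation, is $f_1$-invariant. The classical rigidity theorem of McMullen, which asserts that a subhyperbolic rational map admits no measurable invariant line field on its Julia set, then forces $\mu_\Phi\equiv 0$, so $\Phi$ is Möbius and conjugates $f_1$ to $f_2$. The main obstacle will be establishing the Teichmüller-space contraction in the c-equivalence setup: one has to show that the filtration $\{\mc{W}_n\}$ by fundamental sets is respected by the pullback while still extracting the strict contraction afforded by the subhyperbolic orbifold metric, and this compatibility is precisely the technical heart of the arguments in \cite{CT11,DH93}.
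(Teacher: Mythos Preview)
The paper does not give a proof of this theorem; it merely states it as ``the rigidity part of Thurston Theorem'' and refers to \cite{CT11,DH93}. So there is no proof in the paper to compare your proposal against.

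Your outline is a reasonable sketch of the Thurston pullback argument as carried out in those references: iterated lifting of the c-equivalence pair, convergence of the resulting sequence, and conformality of the limit via absence of invariant line fields on $\mc{J}_{f_1}$. Two remarks are in order. First, the space you call $\tu{Teich}(\wlC,P_0)$ with $P_0=\tu{post}(f_1)\cup\ol{\mc{W}}$ is not the finite-dimensional Teichm\"uller space of Douady--Hubbard, since $\tu{post}(f_1)$ may be infinite and $\ol{\mc{W}}$ has nonempty interior; the contraction argument in this setting is genuinely more delicate and is precisely what \cite{CT11} supplies, so your acknowledgment that this is ``the technical heart'' is apt but should not be understated. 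Second, for a subhyperbolic map the expanding orbifold metric already forces $\mc{J}_{f_1}$ to have zero Lebesgue measure, so the no-invariant-line-field step is in fact trivial here (McMullen's deeper theorem is not needed). With these caveats, your plan matches the structure of the cited proofs.
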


A continuous onto map $\eta:\wlC\to\wlC$ is called \emph{monotone} if both $\eta^{-1}(w)$ and $\wlC\setminus\eta^{-1}(w)$ are connected for each point $w\in\wh{\mb{C}}$.

\begin{lem}[from homotopy to isotopy]\label{lem:isotopy}
	Let $f_1$ and $f_2$ be two semi-rational maps. Let $\mc{W}_i$ be a fundamental set of $f_i$ consisting of finitely many Jordan domains with disjoint closures for each $i=1,2$. Suppose that $(\phi,\psi)$ is a pair of monotone maps on $\wlC$ fulfilling that
	\begin{enumerate}
		\item $\phi\circ f_1= f_2\circ \psi$;
		%, where $H_t:=H|_{\wh{\mb{C}}\times t}:\wh{\mb{C}}\to \wh{\mb{C}}$;
		\item $\phi(x)=\psi(x)$\,\,\, $\forall x\in {\tu{post}(f_1)\cup\ol{\mc{W}_1}}$;
		\item $\phi:\tu{post}(f_1)\cup \ol{\mc{W}_1}\to\tu{post}(f_2)\cup\overline{\mc{W}_2}$ is a homeomorphism that is holomorphic in $\mc{W}_1$;
		\item $\phi^{-1}(\tu{post}(f_2)\cup\overline{\mc{W}_2})=\tu{post}(f_1)\cup \ol{\mc{W}_1}$;
		\item the restrictions $\phi, \psi: S_1\to S_2$ are homotopic rel. $\partial S_1$ with $S_i:=\wh{\mb{C}}\setminus(\tu{post}(f_i)\cup \ol{\mc{W}_i})$.
	\end{enumerate}
	Then $f_1$ and $f_2$ are c-equivalent.	
\end{lem}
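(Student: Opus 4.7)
The plan is to upgrade the pair $(\phi,\psi)$ of monotone maps to a pair $(\Phi,\Psi)$ of homeomorphisms of $\widehat{\mathbb{C}}$ that realize the c-equivalence, with fundamental set $\mathcal{W}:=\mathcal{W}_1$. Write $T_i:=\mathrm{post}(f_i)\cup\overline{\mathcal{W}_i}$, so that $S_i=\widehat{\mathbb{C}}\setminus T_i$. Hypotheses (2)-(4) already give that $\phi=\psi$ on $T_1$, that $\phi$ restricts to a homeomorphism $T_1\to T_2$ which is holomorphic on $\mathcal{W}_1$, and that $\phi^{-1}(S_2)=S_1$. The work splits in two: first, make $\phi,\psi$ into homeomorphisms while preserving the functional equation in (1); then, promote the homotopy in (5) to an isotopy.

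\textbf{Step 1: replace $\phi|_{S_1}$ by a homeomorphism.} Because $\phi$ is a monotone self-map of $\widehat{\mathbb{C}}$, each fiber $\phi^{-1}(y)$ is cellular, and Moore's decomposition theorem identifies $\phi$ as a near-homeomorphism, i.e., a uniform limit of homeomorphisms. Condition (4) forces all nontrivial fibers to lie inside $S_1$ and map into $S_2$, while $\phi|_{\partial S_1}$ is already a homeomorphism by (3). From these ingredients I extract a homeomorphism $\Phi_0:S_1\to S_2$ homotopic to $\phi|_{S_1}$ rel.\,$\partial S_1$, and define $\Phi=\phi$ on $T_1$, $\Phi=\Phi_0$ on $S_1$. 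Thus $\Phi$ is a homeomorphism of $\widehat{\mathbb{C}}$, holomorphic on $\mathcal{W}_1$, and connected to $\phi$ by a homotopy supported in $S_1$.

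\textbf{Step 2: construct $\Psi$ by homotopy lifting.} Since $\mathrm{post}(f_i)\subseteq T_i$, each $f_i:\widehat{\mathbb{C}}\setminus f_i^{-1}(T_i)\to S_i$ is an unbranched finite covering. Precomposing the homotopy from $\phi$ to $\Phi$ with $f_1$ yields a homotopy of maps $f_1^{-1}(S_1)\to S_2$, starting from $\phi\circ f_1=f_2\circ\psi$ and fixed on $f_1^{-1}(\partial S_1)$. I lift it through $f_2$ with initial lift $\psi|_{f_1^{-1}(S_1)}$ to obtain a homotopy whose endpoint I call $\Psi_1$. Set $\Psi=\Psi_1$ on $f_1^{-1}(S_1)$ and $\Psi=\psi$ on $f_1^{-1}(T_1)$. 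Continuity across the common boundary is automatic from the rel-boundary condition, and $\Phi\circ f_1=f_2\circ\Psi$ holds on $\widehat{\mathbb{C}}$ by construction. A local-injectivity check (compose local inverses of $f_2$ with $\Phi$ and local branches of $f_1$), together with the matching global degree $\deg f_1=\deg f_2=d$, shows that $\Psi$ is a homeomorphism.

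\textbf{Step 3: promote homotopy to isotopy.} Concatenating the reversed homotopy $\Phi\sim\phi$ from Step 1, the homotopy $\phi\sim\psi$ on $S_1$ rel.\,$\partial S_1$ provided by hypothesis (5), and the homotopy $\psi\sim\Psi$ produced in Step 2 gives a homotopy $\Phi\sim\Psi$ rel.\,$T_1$ between two orientation-preserving homeomorphisms of the sphere. Since $T_1$ is a finite disjoint union of closed Jordan domains together with finitely many isolated points (the accumulation set $\mathrm{post}(f_1)'$ is finite by semi-rationality), any such homotopy rel.\,$T_1$ can be upgraded to an ambient isotopy rel.\,$T_1$ by standard planar-surface arguments, which furnishes the isotopy required for c-equivalence. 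The main obstacle is Step 1: extracting a homotopy-to-a-homeomorphism rel.\,$\partial S_1$ from the mere monotonicity of $\phi$ requires applying Moore's theorem while controlling the boundary behaviour, and condition (4) is the essential technical input there because it prevents any nontrivial fiber of $\phi$ from straddling $\partial S_1$, so that the decomposition of $\widehat{\mathbb{C}}$ into $\phi$-fibers is trivial on $T_1$ and is a genuine upper semi-continuous decomposition only inside $S_1$.
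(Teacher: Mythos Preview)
Your proposal is correct and follows essentially the same route as the paper's proof: both invoke Moore's theorem to replace the monotone map $\phi$ by a homeomorphism via a pseudo-isotopy rel.\ $T_1$, lift through the semi-conjugacy $\phi\circ f_1=f_2\circ\psi$ to obtain the companion homeomorphism, and then upgrade the resulting homotopy of homeomorphisms on $S_1$ to an isotopy using the classical surface result (the paper cites \cite[Theorem 1.12]{FM11}, which is exactly your ``standard planar-surface arguments''). The only organizational difference is that the paper runs the pseudo-isotopy and its lift globally on $\widehat{\mathbb{C}}$ and reads off the homeomorphisms $h,\widetilde{h}$ at time $0$, whereas you restrict the lifting to the unbranched locus $f_1^{-1}(S_1)$ and then patch and verify injectivity by hand; the paper's global formulation is slightly cleaner because the intermediate stages of the pseudo-isotopy are already homeomorphisms, which immediately forces $\Phi_t(S_1)=S_2$ and makes the lift automatically a homeomorphism.
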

\begin{proof}
Since $\phi$ is monotone, the map $\phi$ can be realized as the end of a pseudo-isotopy $\Phi:\wlC\times [0,1]\to \wlC$ rel. $\tu{post}(f_1)\cup \ol{\mc{W}_1}$, according to Moore's Theorem \cite{Mo25}. Let $h:=\Phi|_{\wlC\times \{0\}}$ be a homeomorphism.
Then by condition (1) and the homotopy lifting theorem, there exists a pseudo-isotopy $\Psi:\wlC\times [0,1]\to \wlC$ rel. $\tu{post}(f_1)\cup \ol{\mc{W}_1}$ such that
\begin{itemize}
\item  $\Psi|_{\wlC\times\{1\}}=\psi$ and $\wt{h}:=\Psi|_{\wlC\times \{0\}}$ is a homeomorphism;
\item $(\Phi|_{\wh{\mb{C}}\times\{t\}})\circ f_1=f_2\circ(\Psi|_{\wh{\mb{C}}\times\{t\}})$ on $\wlC$ for each $t\in[0,1]$, in particular, $h\circ f_1=f_2\circ \wt{h}$.
%\item  $\Psi(z,t)\in S_2$ for any $z\in S_1,t\in[0,1]$.
\end{itemize}
From condition (5), the restrictions $h,\wt{h}:S_1\to S_2$ are homotopic relative to $\partial S_1.$ By \cite[Theorem 1.12]{FM11}, they are isotopic rel. $\partial S_1$ on $S_1$. Since $\wt{h}=\psi=\phi=h$ on $\tu{post}(f_1)\cup \ol{\mc{W}_1}$ from condition (2), globally it holds that $h,\wt{h}:\wlC\to\wlC$ are isotopic rel. $\tu{post}(f_1)\cup \ol{\mc{W}_1}$. The proof of the lemma is complete.
\end{proof}

\subsection{Quasiconformal surgery in the attracting basins}

A standard quasiconformal surgery allows us to revise the dynamics of  rational maps on the attracting basins; see \cite{BF14,DH85}. We write a precise form in the following for the reference in Section \ref{proof}. The proof is left in the appendix; see Section \ref{app}.

\begin{lem}\label{lem:isotopy-annulus}
	Let $f$ be a rational map. Let $W_f$ be a preperiodic attracting Fatou domain with $f^{p+n}(W_f)=f^n(W_f)$ for some minimal integers $n\geq 1$ and $p\geq 1$. Assume the components of $\tu{orb}(W_f)$ are Jordan domains. Let $B_f$ be a quasi-disk compactly contained in $W_f$ such that for each $1\leq i\leq n+p$
	\begin{equation}\label{eq:deg}
	\tu{deg}(f^i:B_f\to f^i(B_f))=\tu{deg}(f^i:W_f\to f^i(W_f))\tu{ and } f^{n+p}(B_f)\Subset f^n(B_f).
	\end{equation}
	Let $R$ be a hyperbolic rational map whose Julia set is connected. Suppose that
	 there exists a Fatou domain $W_R$ of $R$ and a homeomorphism $\eta:\tu{orb}(\partial W_R)\to\tu{orb}(\partial W_f)$ such that
	 \begin{equation}\label{eq:com}
	\eta\circ R(z)=f\circ \eta(z) \text{ for all $z\in \tu{orb}(\partial W_R)$}.
	 \end{equation}
Then there exists a hyperbolic rational map $R_*$, a quasiconformal map $h$ on $\wlC$, and a pair of homeomorphisms  $\eta_0,\eta_1:\tu{orb}(\overline{W_{R_*}})\to\tu{orb}(\overline{W_f}),$ such that
	\begin{itemize}
\item the restriction $h:\wlC\setminus\UUU_{R}\to\wlC\setminus\UUU_{R_*}$ is a conjugacy between $R$ and $R_*$, where $\mc{U}_{R_*}$ is the grand orbit of the Fatou domain $W_{R_*}:=h(W_R)$ of $R_*$;
\item the restriction $h:\mc{F}_R\setminus\mc{U}_R\to \mc{F}_{R_*}\setminus\mc{U}_{R_*}$ is conformal, where $\mc{F}_R:=\wh{\mb{C}}\setminus\mc{J}_R$;
\item $\eta_0$ is isotopic to $\eta_1$ rel. $\tu{orb}(\ol{B_{R_*}})\cup \tu{orb}(\partial W_{R_*})$, where ${B}_{R_*}:=\eta_0^{-1}(B_f)\Subset W_{R_*}$ satisfies \eqref{eq:deg} for $R_*$.
\item the restriction $\eta_0:\tu{orb}(B_{R_*})\to \tu{orb}(B_R)$ is conformal;

\item  $\eta_0=\eta\circ h^{-1}$ on $\tu{orb}(\partial W_{R_*})$;
		\item $\eta_0\circ R_*=f\circ \eta_1$\text{ on $\tu{orb}(\overline{W_{R_*}})$}.		
	\end{itemize}
\end{lem}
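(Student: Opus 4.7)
I propose to prove the lemma by a quasiconformal surgery inside the grand orbit $\mc{U}_R$ of $W_R$: replace the complex structure there with one pulled back via a suitable quasiconformal extension of $\eta$, then integrate by the Measurable Riemann Mapping Theorem.

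\textbf{Step 1: Extend $\eta$ equivariantly.} First I plan to construct a quasiconformal homeomorphism $\Phi : \tu{orb}(\ol{W_R}) \to \tu{orb}(\ol{W_f})$ satisfying (a) $\Phi = \eta$ on $\tu{orb}(\partial W_R)$, (b) $\Phi \circ R = f \circ \Phi$ on $\tu{orb}(\ol{W_R})$, and (c) $\Phi$ is conformal on the orbit of a quasi-disk $B'_R \Subset W_R$ with $\Phi(B'_R) = B_f$ and with $B'_R$ satisfying the analog of \eqref{eq:deg} for $R$. To build $\Phi$, I would work on the periodic cycle of Fatou domains hit by $W_R$ and $W_f$: use a local linearizing chart for $f^p$ on $\tu{orb}(\ol{B_f})$ to prescribe the conformal part of $\Phi$ on $\tu{orb}(\ol{B'_R})$, then interpolate quasiconformally on each annular complement $W_R^{(k)}\setminus\ol{B_R^{(k)}}$ to match the boundary values given by $\eta$. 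The degree-matching hypothesis \eqref{eq:deg} ensures that these annular covers are unbranched and of matching degrees on both sides, so the interpolation can be propagated equivariantly around the $p$-cycle. Finally, extend $\Phi$ to the preperiodic domain $W_R$ itself by lifting through $R$ finitely many times.

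\textbf{Step 2: Straighten and conclude.} Set $\mu := \mu_\Phi$ on $\tu{orb}(\ol{W_R})$, extend to $\mc{U}_R$ by iterated $R$-pullback, and set $\mu \equiv 0$ on $\wlC \setminus \mc{U}_R$; then $\|\mu\|_\infty < 1$ by quasiconformality of $\Phi$, $\mu$ is $R$-invariant by $\Phi \circ R = f \circ \Phi$, and $\mu \equiv 0$ on $\tu{orb}(B'_R)\cup(\wlC \setminus \mc{U}_R)$. The Measurable Riemann Mapping Theorem yields a quasiconformal $h : \wlC \to \wlC$ with $\mu_h = \mu$, and $R_* := h \circ R \circ h^{-1}$ is a rational map. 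Define $W_{R_*} := h(W_R)$, $B_{R_*} := h(B'_R)$, $\eta_0 := \Phi \circ h^{-1}$, and $\eta_1 := \eta_0$. Each conclusion of the lemma now follows directly: the first two bullets from $\mu \equiv 0$ off $\mc{U}_R$; conformality of $\eta_0$ on $\tu{orb}(B_{R_*})$ from $\mu \equiv 0$ on $\tu{orb}(B'_R)$ together with conformality of $\Phi$ there; the identity $\eta_0 = \eta \circ h^{-1}$ on $\tu{orb}(\partial W_{R_*})$ from $\Phi = \eta$ on $\tu{orb}(\partial W_R)$; the intertwining $\eta_0 \circ R_* = f \circ \eta_1$ as an algebraic consequence of $\Phi \circ R = f \circ \Phi$ and the definition $R_* = h R h^{-1}$; and the isotopy of $\eta_0$ and $\eta_1$ is the constant homotopy since they are equal.

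\textbf{Main obstacle.} The technical heart of the argument is Step 1, where one must build $\Phi$ so that it is simultaneously conformal on $\tu{orb}(B'_R)$, dynamically equivariant everywhere on $\tu{orb}(\ol{W_R})$, and in agreement with $\eta$ on the outer boundary. The degree-matching hypothesis \eqref{eq:deg} is essential: without it the annular complements would carry branch points of the dynamics and no equivariant quasiconformal interpolation would be available. Once $\Phi$ is built, the remaining argument is a routine application of the Measurable Riemann Mapping Theorem.
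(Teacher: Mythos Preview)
Your Step~1 cannot be carried out as stated: the three requirements (a), (b), (c) on $\Phi$ are in general incompatible. Indeed, let $z_0\in R^{n}(W_R)$ be the attracting periodic point of $R$ of period $p$, with multiplier $\lambda_R=(R^{p})'(z_0)$, and let $\lambda_f$ be the corresponding multiplier for $f$. Since $z_0$ lies in $\tu{orb}(B'_R)$, condition~(c) makes $\Phi$ holomorphic in a neighbourhood of $z_0$; condition~(b) gives $\Phi\circ R^{p}=f^{p}\circ\Phi$ there. Differentiating at $z_0$ yields $\Phi'(z_0)\,\lambda_R=\lambda_f\,\Phi'(z_0)$, hence $\lambda_R=\lambda_f$. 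Nothing in the hypotheses of the lemma forces the multipliers to agree, so the equivariant conformal conjugacy $\Phi$ you postulate need not exist. The same obstruction propagates to any internal conformal invariant of the attracting basins (positions of critical values in linearizing coordinates, etc.), so the difficulty is not removable by a cleverer interpolation on the annuli.

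This is precisely why the paper does \emph{not} try to produce a single equivariant $\Phi$. Instead it transplants the $f$--dynamics into the $R$--basins by building a quasi-regular model $\mc R$ (equal to $R$ outside $\tu{orb}(W_R)$ and conformally conjugate to $f$ on deep sub-disks), using \emph{two} maps $\alpha_0,\alpha_1$ with $\alpha_0\circ f=\mc R\circ\alpha_1$; these agree on the sub-disks but generally differ on the intermediate annuli by Dehn twists, which the paper corrects explicitly before straightening. After integrating the $\mc R$--invariant Beltrami coefficient one gets $R_*$ whose multiplier is $\lambda_f$ (not $\lambda_R$), and $\eta_0=\alpha_0^{-1}\circ h^{-1}$, $\eta_1=\alpha_1^{-1}\circ h^{-1}$ are genuinely different but isotopic rel.\ $\tu{orb}(\ol{B_{R_*}})\cup\tu{orb}(\partial W_{R_*})$. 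Your shortcut of taking $\eta_0=\eta_1$ and invoking the constant homotopy collapses exactly the step where the nontrivial content of the lemma lives. To repair your argument you would have to drop either the equivariance or the conformality of $\Phi$ near the cycle; doing so and then tracking the resulting discrepancy leads you back to the paper's two-map construction with the twist correction.
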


\begin{lem}\label{lem:same-component}
Let $R_1$ and $R_2$ be two hyperbolic rational maps of degree $d\geq 2$ with connected Julia sets. If $\phi:\wlC\to\wlC$ is an orientation-preserving homeomorphism such that $\phi\circ R_1(z)=R_2\circ\phi(z)$ for all $z\in \mathcal{J}_{R_1}$, then $R_1$ and $R_2$ belong to the same hyperbolic component of $\tu{Rat}_d$.	
\end{lem}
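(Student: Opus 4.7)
My plan is to extend the partial conjugacy $\phi|_{\mathcal{J}_{R_1}}$ to a global quasiconformal conjugacy $\Phi:\wh{\mb{C}}\to\wh{\mb{C}}$ between $R_1$ and $R_2$, and then to apply a Beltrami-path argument to produce a continuous family of hyperbolic rational maps in $\tu{Rat}_d$ joining $R_1$ to $R_2$.

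The extension proceeds in two stages. Since $R_1$ and $R_2$ are hyperbolic with connected Julia sets, each acts as a uniformly expanding conformal repeller on its Julia set. By a classical rigidity result for such repellers, the topological conjugacy $\phi|_{\mathcal{J}_{R_1}}$ is bi-H\"older and extends to a quasiconformal conjugacy on an open neighborhood $U$ of $\mathcal{J}_{R_1}$. In the second stage I extend across the Fatou set: hyperbolicity of $R_1$, together with connectedness of the Julia set, forces every Fatou component to be a topological disk preperiodic to an attracting cycle. Because $\phi$ is a boundary conjugacy, corresponding cycles of Fatou components of $R_1$ and $R_2$ have the same period, and the first-return maps on matched components are proper holomorphic self-maps of topological disks of the same degree, hence both quasiconformally conjugate to a common Blaschke model. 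I piece these together to produce a quasiconformal extension across each periodic cycle that agrees on the boundary with the conjugacy from the first stage, and then pull back through $R_1$ finitely many times to handle the strictly preperiodic components. The resulting map $\Phi:\wh{\mb{C}}\to\wh{\mb{C}}$ is globally quasiconformal and satisfies $\Phi\circ R_1=R_2\circ\Phi$. I expect this second stage to be the delicate point: since the multipliers of corresponding attracting cycles of $R_1$ and $R_2$ are generically different, the extension over each basin must be genuinely quasiconformal and must simultaneously be dynamically compatible with the boundary data fixed in the first stage.

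Once $\Phi$ is constructed, set $\mu:=\Phi^*\mu_0$; this is an $R_1$-invariant Beltrami differential with $\|\mu\|_\infty<1$. For $t\in[0,1]$, the measurable Riemann mapping theorem yields a continuous family of quasiconformal maps $\Phi_t$ of $\wh{\mb{C}}$ with Beltrami coefficient $t\mu$, normalized to fix three given points. The family $g_t:=\Phi_t\circ R_1\circ\Phi_t^{-1}$ is then a continuous path in $\tu{Rat}_d$, consisting of rational maps all quasiconformally conjugate to $R_1$ (hence hyperbolic), with $g_0=R_1$ and $g_1$ M\"obius-conjugate to $R_2$. A further continuous path of M\"obius conjugations, which stays inside the hyperbolic locus, finishes the connection and shows that $R_1$ and $R_2$ lie in the same hyperbolic component of $\tu{Rat}_d$.
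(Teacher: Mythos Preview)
Your plan is workable and would give a correct proof, but it is a genuinely different route from the paper's. The paper never builds a global quasiconformal conjugacy. Instead it first reduces to the case where both $R_i$ are postcritically finite, invoking Milnor's result that every hyperbolic component contains such a map. For postcritically finite $R_1,R_2$ the Julia-set conjugacy is extended over each (disk) Fatou component to a pair of homeomorphisms $(\phi_1,\phi_2)$ with $\phi_1\circ R_1=R_2\circ\phi_2$; Alexander's trick makes them isotopic rel $\textup{post}(R_1)$, Thurston rigidity (Theorem~\ref{theorem:thurston}) then forces $R_2=\alpha^{-1}\circ R_1\circ\alpha$ for some M\"obius $\alpha$, and a path through $\textup{PSL}(2,\mathbb{C})$ joins $R_1$ to $R_2$ inside the hyperbolic locus. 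This is short, at the cost of two substantial black boxes (existence of postcritically finite centers and Thurston rigidity). Your argument is more direct and analytic, resting on Sullivan-type rigidity of the hyperbolic repeller and a Blaschke-model extension, followed by the Beltrami path; it buys you a proof that does not pass through the postcritically finite representative. Two points to tighten if you carry this out: there are in general infinitely many strictly preperiodic Fatou components, so ``finitely many pullbacks'' must become the full inverse orbit, with dilatation controlled because holomorphic pullback preserves $\|\mu\|_\infty$; and you should say why your stage-one neighborhood extension and your stage-two Fatou extension can be made to agree on their overlap---or, more cleanly, drop stage one, extend the quasisymmetric boundary conjugacy directly into each Fatou component, and use that $\mathcal{J}_{R_1}$ has measure zero to conclude global quasiconformality of the assembled map.
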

\begin{proof}
We first consider the case that both $R_i$ are postcritically finite. Then the conjugacy $\phi:\mathcal{J}_{R_1}\to\mathcal{J}_{R_2}$ between $R_1|_{\mathcal{J}_{R_1}}$ and $R_2|_{\mathcal{J}_{R_2}}$ can be extended to a pair of homeomorphisms $(\phi_1,\phi_2)$ on $\wlC$ such that $\phi_1\circ R_1=R_2\circ \phi_2$. Moreover, $\phi_1$ and $\phi_2$ are isotopic rel. $\tu{post}(R_1)$ by Alexander's trick. Then there exists a M\"{o}bius transformation $\alpha$ such that $R_2=\alpha^{-1}\circ R_1\circ \alpha$ by Theorem \ref{theorem:thurston}.
Let $\alpha_t,1\leq t\leq 2,$ be a path in $\tu{SL}(2,\mathbb{C})$ with $\alpha_1:=\tu{id}$ and $\alpha_2:=\alpha$. Then $R_t:=\alpha_t^{-1}\circ R_1\circ \alpha_t$ is a path in ${\rm Rat}_d$ joining $R_1$ and $R_2$. Clearly, all the maps $R_t,1\leq t\leq 2,$ are hyperbolic. Thus $R_1$ and $R_2$ lie in the same hyperbolic component.

Otherwise, it is known from \cite[Theorem 9.3]{Mi12} that the hyperbolic component of $\tu{Rat}_d$ containing $R_i$ possesses a postcritically finite rational map $\wt{R}_i$ for each $i\in\{1,2\}$. The above arguments imply that $\wt{R}_1$ and $\wt{R}_2$ are in the same hyperbolic component, and so are $R_1$ and $R_2$. The proof of the lemma is complete.
\end{proof}
\subsection{Pinching and plumbing deformations}
Let $f$ be a subhyperbolic rational map. A family $\Gamma$ of finitely many disjoint open arcs is called \emph{admissible} for $f$, provided that
\begin{itemize}
	\item invariant: all arcs in $\Gamma$ avoid the critical points of $f$ and $f:\cup_{\gamma\in\Gamma}\gamma\to\cup_{\gamma\in\Gamma}\gamma$ is a homeomorphism;
	\item each $\gamma\in\Gamma$ lies in a geometrically attracting (not super-attracting) periodic Fatou domain and it joins the attracting periodic point in this basin to a point in the boundary (which must be a periodic repelling point in the Julia set by the above invariant property);
	%\item unlinked: the arcs in $\Gamma$ starting at the same attracting periodic point are in the same periodic cycle;
	\item non-separating: the set $\mc{J}_f\setminus f^{-i}(\cup_{\gamma\in\Gamma}\ol{\gamma})$ is connected for each $i\geq 0$.
\end{itemize}

In \cite{CT18}, the authors proved that, given a subhyperbolic rational map $f$ and an admissible family $\Gamma$ for $f$, one can shrink the iterated pre-images of arcs in $\Gamma$ by occupying a special quasiconformal deformation $f_t=\phi_t\circ f\circ \phi_t^{-1}, t\geq 0$ with $f_0=f$. The deformation takes place in a totally $f$-invariant open subset of the Fatou set $\mc{F}_f:=\wh{\mb{C}}\setminus\mc{J}_f$. The set contains the iterated pre-images of arcs in $\Gamma$. Such a deformation $f_t, t\geq 0$ in $\tu{Rat}_d$ is called a \emph{pinching path supported on $\Gamma$}.
\begin{thm}[{\cite[Theorem 1.5]{CT18}}]\label{thm:pinching}
	Any pinching path $f_t=\phi_t\circ f\circ \phi_t^{-1}, t\geq 0$ supported on an admissible family $\Gamma$ has the following properties:
	\begin{itemize}
		
		\item $f_t$ converges uniformly to a geometrically finite rational map $f_\infty$ as $t\to\infty$;
		\item $\phi_t$ converges uniformly to a continuous onto map $\phi_\infty$ of $\wh{\mb{C}}$ as $t\to\infty$;
		\item $\phi_\infty\circ f(z)=f_\infty\circ \phi_\infty(z)$ for all $z\in \wh{\mb{C}}$;
		\item the map $\phi_\infty$ can be charactered explicitly that, for a point $x\in\widehat{\mathbb{C}}$,
		$$\#\phi_\infty^{-1}(x)>1\Leftrightarrow \phi^{-1}(x)\textup{ is a component of }\cup_{i\geq 0} f^{-i}(\cup_{\gamma\in\Gamma}\overline{\gamma});$$
		thus the restriction $\phi_\infty:\mc{J}_f\to\mc{J}_g$ is a semi-conjugacy;
		\item $\phi_t,t\ge0$ and $\phi_\infty$ are holomorphic in the open set $\mc{F}_f\setminus \mc{U}_{\Gamma}$, where $\mc{U}_{\Gamma}$ denotes the union of the Fatou domains which intersect the iterated preimages of arcs in $\Gamma$.
	\end{itemize}
\end{thm}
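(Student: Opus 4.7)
The plan is to construct the pinching path explicitly via an invariant family of Beltrami coefficients, integrate using the Measurable Riemann Mapping Theorem, and then analyze the limit as $t\to\infty$. First, for each arc $\gamma\in\Gamma$ I work in a linearizing (Koenigs) coordinate on the geometrically attracting periodic basin containing $\gamma$, so that $\gamma$ becomes a radial segment across a fundamental annulus around the attracting periodic point. Choose a local Beltrami coefficient depending on the parameter $t$ that, when integrated, stretches this annulus in the direction perpendicular to $\gamma$, with the modulus of the target annulus tending to infinity as $t\to\infty$; push this local datum along the grand orbit under $f$ and extend by zero elsewhere to obtain an $f$-invariant Beltrami coefficient $\mu_t$ supported in $\mc{U}_\Gamma$. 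Solving $\bar\partial\phi_t=\mu_t\partial\phi_t$ with a three-point normalization produces the quasiconformal maps $\phi_t$, and $f$-invariance of $\mu_t$ makes $f_t:=\phi_t\circ f\circ\phi_t^{-1}$ a rational map of degree $d$ for each $t$.

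The bulk of the work is passing to the limit. On compact subsets away from the grand orbit of $\Gamma$, the complex dilatations of $\phi_t$ remain uniformly controlled, so after the normalization an Arzel\`a--Ascoli argument gives uniform convergence of $\phi_t$ on such compacta to a limit that is conformal on $\mc{F}_f\setminus\mc{U}_\Gamma$. On each component $C$ of $\cup_{i\geq 0}f^{-i}(\cup\overline\gamma)$, the enveloping annuli have target moduli diverging to infinity, so $\phi_t$ must collapse $C$ to a single point in the limit, yielding exactly the stated characterization of $\phi_\infty^{-1}$; this also promotes $\phi_t\to\phi_\infty$ to uniform convergence on all of $\wlC$ as a continuous surjection. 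Since $\mathrm{Rat}_d$ is closed under uniform convergence and degree is preserved, $f_t$ converges to some $f_\infty\in\mathrm{Rat}_d$ of degree $d$, and passing to the limit in $\phi_t\circ f=f_t\circ\phi_t$ gives $\phi_\infty\circ f=f_\infty\circ\phi_\infty$; restricting to $\mc{J}_f$ yields the semi-conjugacy. That $f_\infty$ is geometrically finite follows because each pinched arc identifies an attracting periodic point with a repelling periodic point on the basin boundary, producing a parabolic cycle for $f_\infty$, while all other critical orbits behave as they did for $f$.

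The main obstacle is to ensure that $\phi_\infty$ collapses \emph{only} the components $C$ listed above and that the degree does not drop in the limit. The non-separating hypothesis in the definition of admissibility is exactly what is needed: it guarantees that $\mc{J}_f\setminus f^{-i}(\cup_{\gamma\in\Gamma}\overline\gamma)$ remains connected at every level, so the pinched arcs cannot conspire to pinch off an entire Fatou component or a larger subset of the Julia set, keeping the fibers of $\phi_\infty$ small. Combined with the $f$-invariance of $\mu_t$ (which forces the deformation to respect the dynamics at every level and so controls the branching pattern of $f_t$ uniformly in $t$) and the three-point normalization (which keeps $\{\phi_t\}$ precompact among sphere homeomorphisms), this yields the stated convergence and leaves the dynamics of $f_\infty$ outside the new parabolic cycles essentially unchanged, while the last bullet about conformality on $\mc{F}_f\setminus\mc{U}_\Gamma$ is immediate from $\mu_t\equiv 0$ there.
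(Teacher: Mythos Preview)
The paper does not prove this statement: Theorem~\ref{thm:pinching} is quoted from \cite[Theorem~1.5]{CT18} and used as a black box, so there is no proof in the paper to compare your proposal against.

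As a standalone sketch of the Cui--Tan argument your outline is in the right spirit, but it glosses over the genuinely hard parts. The most serious issue is your sentence ``Since $\mathrm{Rat}_d$ is closed under uniform convergence and degree is preserved, $f_t$ converges to some $f_\infty\in\mathrm{Rat}_d$ of degree $d$'': this is false in general --- degree can and does drop under uniform limits of rational maps, and ruling this out here is one of the central difficulties of \cite{CT18}. You acknowledge this as ``the main obstacle'' but the justification you offer (non-separating $\Rightarrow$ fibers stay small $\Rightarrow$ no degree drop) is not an argument; turning that intuition into a proof requires the substantial machinery developed in \cite{CT18}, including control of nested annuli and careful modulus estimates to show that no additional collapsing occurs in $\mc{J}_f$. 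Likewise, upgrading subsequential Arzel\`a--Ascoli limits of $\phi_t$ to \emph{uniform} convergence of the full family, and identifying the limit map $\phi_\infty$ precisely, is not automatic from a normalization alone. Your final bullet (conformality on $\mc{F}_f\setminus\mc{U}_\Gamma$ because $\mu_t\equiv 0$ there) is correct and is the easy part.
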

We have two remarks: first, the arcs in an admissible family are disjoint from the postcritical set in the original setting of \cite{CT18}, however, their arguments still work in our situation; second, the starting map $f_0$ of a pinching path is allowed to have parabolic cycles, while throughout this paper, we only deal with the pinching paths starting from subhyperbolic rational maps.

Conversely, a geometrically finite rational map with parabolic cycles is the limit of certain pinching paths. These possible pinching paths can be encoded by a finite set of combinatorial data, namely, plumbing combinatorics.

\begin{thm}[{\cite[Compare Theorem 1.6]{CT18}}]\label{thm:plumbing}
	Let $f_\infty$ be a geometrically finite rational map with parabolic cycles and let $\sigma$ be a plumbing combinatoric of $f_\infty$. Then $f_\infty$ is the limit of a pinching path $f_t=\phi_t\circ f\circ \phi_t^{-1}, t\geq 0$ along $\sigma$ that starts from a subhyperbolic rational map $f$ and supports on an admissible family $\Gamma$ of $f$.
\end{thm}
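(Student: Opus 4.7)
The plan is to invert the pinching construction of Theorem \ref{thm:pinching}. The plumbing combinatoric $\sigma$ should be read as a prescription which, at each parabolic cycle of $f_\infty$, specifies how to split the cycle into a geometrically attracting cycle paired with a repelling cycle: which attracting petal is glued to which repelling petal, and with what twist. Using this data I would first build a quasiregular model on $\wh{\mb{C}}$ that agrees with $f_\infty$ off small neighborhoods of the parabolic cycles, straighten it to a subhyperbolic rational map $f$ carrying a canonical admissible arc family $\Gamma$, and then apply Theorem \ref{thm:pinching} to $(f,\Gamma)$ to produce a pinching path whose limit must then be identified with $f_\infty$.

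For the local plumbing, near each parabolic periodic point $p$ of $f_\infty$ I fix attracting and repelling Fatou coordinates; in these coordinates the appropriate iterate of $f_\infty$ acts as $z\mapsto z+1$ on suitable half-planes. For each pair of petals singled out by $\sigma$ I would excise a small horodisk neighborhood of $p$ in each petal and identify the excised pieces via an affine map of the form $z\mapsto -z+c$ in Fatou coordinates, inserting an annular tube whose modulus depends on a parameter $s$. The resulting quasiregular branched cover $F_s$ of $\wh{\mb{C}}$ agrees with $f_\infty$ off a compact region, and the two former parabolic periodic points, now separated by the tube, become a geometrically attracting periodic point lying in a new Fatou domain together with a repelling periodic point on its boundary. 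A Beltrami coefficient supported on the grand orbits of the inserted annuli is forward-invariant under $F_s$, so the measurable Riemann mapping theorem straightens $F_s$ to a rational map $f$. Because the critical points of $f_\infty$ unaffected by the surgery keep their preperiodic behaviour while the former parabolic critical orbits are absorbed into the new attracting basins, $f$ is subhyperbolic. The admissible family $\Gamma$ consists of one arc per inserted annulus, running from the new attracting periodic point to the corresponding repelling boundary point along the core of the tube; its invariance, its location in a geometrically attracting but non-super-attracting Fatou domain, and its non-separation property all follow from the local model of the surgery.

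Feeding $(f,\Gamma)$ into Theorem \ref{thm:pinching} yields a pinching path $f_t$ with some geometrically finite limit $f_\infty'$, and the remaining task is to identify $f_\infty'$ with the given $f_\infty$. The explicit description of the semiconjugacy $\phi_\infty$ in Theorem \ref{thm:pinching} matches the Julia-set dynamics of $f_\infty'$ with those of $f$, which in turn agree with those of $f_\infty$ off the parabolic basins by construction, while the fundamental sets untouched by the plumbing are preserved in an obvious way. These data put $f_\infty'$ and $f_\infty$ into c-equivalence as semi-rational maps via Lemma \ref{lem:isotopy}, and Theorem \ref{theorem:thurston}, after the standard reduction of geometrically finite maps to subhyperbolic ones by an auxiliary quasiconformal deformation in the attracting basins of the parabolic cycles, upgrades this c-equivalence to conformal conjugacy. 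The main obstacle is precisely this final identification: one needs uniform control of the Fatou coordinates along the plumbing parameter $s$ so that the moduli of the inserted plumbing annuli and of the collapsing pinching annuli are comparable in the limit, together with an argument that the combinatorial class of $f_\infty'$ is genuinely the prescribed $\sigma$ and not some distinct class realising the same dynamical signature.
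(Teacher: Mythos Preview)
The paper does not prove Theorem~\ref{thm:plumbing} at all: it is quoted as a black box from \cite[Theorem~1.6]{CT18}, with no argument given. So there is nothing in the paper to compare your proposal against.

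That said, your sketch is a recognisable outline of the Cui--Tan plumbing construction, and the first two paragraphs capture the right shape of the argument: local surgery in Fatou coordinates, straightening via the measurable Riemann mapping theorem, and then pinching the resulting subhyperbolic map along the core arcs of the inserted tubes. Where your outline becomes unreliable is the final identification of $f_\infty'$ with $f_\infty$. You invoke Theorem~\ref{theorem:thurston}, but that rigidity statement is for \emph{subhyperbolic} maps, and both $f_\infty$ and $f_\infty'$ have parabolic cycles; the phrase ``standard reduction of geometrically finite maps to subhyperbolic ones by an auxiliary quasiconformal deformation in the attracting basins of the parabolic cycles'' does not describe an actual procedure, since parabolic cycles cannot be removed by a quasiconformal deformation supported in the Fatou set. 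In \cite{CT18} the identification is handled by a rigidity theorem for geometrically finite maps proved there, together with careful control of how the pinching limit depends on the plumbing data; Lemma~\ref{lem:isotopy} and Theorem~\ref{theorem:thurston} of the present paper are not adequate substitutes. If you want to flesh this out into a genuine proof you would need to import that geometrically finite rigidity statement explicitly.
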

\section{Proof of the Main Theorem}\label{proof}

Let $\HHH$ be a hyperbolic component in Theorem \ref{maintheorem}. By the conditions, we can admit the following settings for some hyperbolic rational map $R\in\mc{H}$:
\begin{itemize}
	\item $U_{R,0},\cdots, U_{R,p-1}$ is a Fatou cycle of period $p\geq 1$;
%	\item the backward orbit of the geometrically attracting periodic point $z_R$ in $U_{R,0}$, i.e., $\cup_{i\geq 0} R^{-i}(z_R)$ 
	\item $z_R$ is the geometrically attracting periodic point in $U_{R,0}$;
	\item there is a minimal integer $n\geq0$ such that $R^{-(n+1)}(\cup_{i=0}^{p-1}U_{R,i})\setminus \cup_{i=0}^{p-1}U_{R,i}$ contains a critical Fatou domain $W'_R$;
	\item assume $W_R:=R(W_R')$ and $R^n(W_R)$ is the Fatou domain $U_{R,0}$;
	\item assume further that $c_R$ is the unique critical point in $W_R'$ and in the backward orbits $\cup_{i\geq 0}R^{-i}(\{c_R, z_R\})$; this can be done by a standard quasiconformal surgery \cite{BF14} and Lemma \ref{lem:same-component}.
\end{itemize}
	
In what follows, starting from $R$, we will construct a geometrically finite rational map in $\partial \HHH$ such that its Julia set possesses a buried critical point. The proof is broken up into four steps.

\subsection*{Step 1: Constructing an admissible family of arcs in $U_{R,0},\ldots,U_{R,{p-1}}$.}
We choose a \emph{fundamental annulus} $A$ near $z_R$ with the outer boundary $\gamma_+$ and inner boundary $\gamma_-$, i.e., $R|_{\overline{A}}$ is injective and $\overline{A}\cap R^p(\overline{A})=\gamma_-$. We require further that each of $\gamma_+$ and $\gamma_-$ contains a unique point in the orbit of $c_R$.
%i.e., $h^p$ sends the outer boundary of $A$ homeomorphically onto the inner one and $\cup_{k\geq0} h^{kp}(\overline{A})$ forms a neighborhood of $z_R$.
Take an arc $\tau_0:[0,1]\to\overline{A}$ such that
 \begin{itemize}
 \item $\tau_0(0)=\gamma_+\cap\tu{orb}(c_R),\tau_0(1)=\gamma_-\cap\tu{orb}(c_R)$;
 \item $\tau_0(0,1)\subseteq A$ and is disjoint from the orbits of the critical points of $R$.
 \end{itemize}
For every $k\geq1$, define $\tau_k:=R^{pk}(\tau_0)$. Since $A$ is a fundamental annulus, then $\tau_k$ is disjoint from $\tau_0,\ldots,\tau_{k-2}$ and $\tau_k\cap\tau_{k-1}=\tau_k(0)$.
Note that $\tau_0$ avoids the orbits of the critical points of $R^p$ in $U_{R,0}$, one can always lift $\tau_0$ in $U_{R,0}$ by the iterate $R^p$. Hence, for $k\geq1$, we inductively define $\tau_{-k}$ to be the lift of $\tau_{-(k-1)}$ by $R^p$ based at $\tau_{-(k-1)}(0)$.

Now we set $\gamma_R:=\cup_{k=-\infty}^{+\infty}\tau_k$.
Due to the expansion properties of $R$ (compare \cite[Theorem 18.11]{Mil11}), the arc $\gamma_R$ joins the point $z_R$ and a repelling periodic point $w_R\in\partial U_{R,0}$. Clearly $R^p$ sends $\gamma_R$ onto itself homeomorphically. Let $\Gamma$ be the collection of arcs $\gamma_R, R(\gamma_R),\ldots,R^{p-1}(\gamma_R)$. Then $\Gamma$ is an admissible family for $R$ in $U_{R,0}\cup \ldots\cup U_{R,p-1}$, satisfying that
\begin{itemize}
%	\item $h^p(\gamma_R)=\gamma_R\subseteq U_{h,0}$ with endpoints $z_R,w_R$ periodic;
	\item the arcs in $\Gamma$ avoid the orbits of critical points of $R$ except that of $c_R\in W_R'$;
	\item the arcs in $\Gamma$ contain all the points of $\tu{orb}(c_R)$ in $U_{R,0}\cup \ldots\cup U_{R,p-1}$; this can be done, if we let $R^{n+1}(c_R)$ be close to $z_R$ by a quasiconformal surgery on $R^{n}(W_R')$.
\end{itemize}
Since $\mathcal{J}_R$ is a Sierpi\'nski carpet, each connected component $\xi$ of $\cup_{i\geq0}R^{-i}(\cup_{\gamma\in\Gamma}\overline{\gamma})$ can be explicitly charactered as follows.
\begin{fact}\label{fact:1}
The component $\xi$ is the closure of a component of $R^{-i}(\gamma)$ for some $i\geq 0$ and some $\gamma\in \Gamma$. Moreover, $\xi$ is a closed arc if and only if its orbit is disjoint from $c_R$, and is a star-like tree otherwise; in the latter case, the unique branched point is the iterated preimage of $c_R$ in $\xi$.
\end{fact}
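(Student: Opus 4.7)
The plan is to prove Fact~\ref{fact:1} by induction on the minimal depth $i\geq 0$ at which $R^i$ sends the component $\xi$ of $\bigcup_{j\geq 0}R^{-j}(\bigcup_{\gamma\in\Gamma}\overline{\gamma})$ onto an arc $\overline{\gamma}$ with $\gamma\in\Gamma$. Since $\xi$ is connected and the arcs $\overline{\gamma}$ are pairwise disjoint, this minimal $i$ is well-defined and in fact $R^i(\xi)=\overline{\gamma}$ exactly, so $\xi$ is a component of $R^{-i}(\overline{\gamma})$. The base case $i=0$ is immediate: each such $\xi=\overline{\gamma}$ is a closed arc whose entire forward orbit stays inside the Fatou cycle $U_{R,0}\cup\cdots\cup U_{R,p-1}$, which is disjoint from $c_R\in W_R'$ because $W_R'$ lies outside that cycle.

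For the inductive step, let $\xi'$ be a component of $R^{-(i+1)}(\overline{\gamma})$ and let $\xi$ be the component of $R^{-i}(\overline{\gamma})$ containing $R(\xi')$; a direct local-degree count shows $R\colon\xi'\to\xi$ is surjective and acts as a branched cover, branching only at those critical points of $R$ that happen to lie in $\xi'$. The construction of $\Gamma$ (arcs disjoint from the orbit of every critical point other than $c_R$) ensures that the only critical point of $R$ that can lie in $\xi'$ is $c_R$ itself. When $\xi'$ contains no critical point, $R\colon\xi'\to\xi$ is an unbranched covering of the simply-connected tree $\xi$, hence a homeomorphism, so $\xi'$ inherits the topological type of $\xi$; the branch point of $\xi'$ (if any) is the unique $R$-preimage in $\xi'$ of the branch point of $\xi$, which is automatically an iterated preimage of $c_R$.

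In the other sub-case, $c_R\in\xi'$ and hence $\xi'\subset W_R'$. The crucial input is the preperiodicity of $W_R'$: its orbit $W_R',W_R,R(W_R),\ldots$ enters the Fatou cycle and never returns to $W_R'$, so $c_R\notin R^k(\xi')$ for any $k\geq 1$. By induction the orbit of $\xi=R(\xi')$ is then disjoint from $c_R$, forcing $\xi$ to be an arc; the local model $z\mapsto z^{d_c}$ of $R$ near $c_R$ (with $d_c\geq 2$) lifts this arc to a star-like tree $\xi'$ branching at $c_R$, with valence $d_c$ or $2d_c$ according as $R(c_R)$ is an endpoint or an interior point of $\xi$. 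Uniqueness of the branch point of $\xi'$ in either sub-case reduces to the inductive uniqueness in $\xi$: any other iterated preimage of $c_R$ in $\xi'$ would push forward under $R$ to a second such preimage in $\xi$, a contradiction.

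Finally, the statement that $\xi$ is the closure of a single open component of $R^{-i}(\gamma)$ (not several glued along endpoints) amounts to showing that distinct open components cannot share a boundary point; such a shared point would be an iterated preimage of the endpoint $z_R$ or $w_R$ of $\gamma$, and at any non-critical such preimage exactly one lifted arc can terminate. All iterated preimages of $z_R$ are non-critical by the standing hypothesis that $c_R$ is the unique critical point in $\bigcup_{i}R^{-i}(\{c_R,z_R\})$ (together with the fact that the orbit of $c_R$ merely accumulates on $z_R$ without landing there), while all iterated preimages of the repelling point $w_R$ are non-critical by hyperbolicity, since $\tu{post}(R)$ lies in the Fatou set and thus cannot meet $w_R\in\mc{J}_R$. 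The main technical obstacle I expect is the bookkeeping for the branch-point uniqueness through the induction; once the preperiodicity of $W_R'$ is used to ensure that $c_R$ appears at most once in the forward orbit of any component, the remainder of the argument is essentially a combinatorial consequence of the local branched-cover picture.
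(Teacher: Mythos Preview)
Your argument is sound and considerably more detailed than what the paper offers: the paper does not actually prove Fact~\ref{fact:1} at all, but simply asserts it as an immediate consequence of the Sierpi\'nski carpet hypothesis and the construction of $\Gamma$. Your inductive scheme on the depth $i$, together with the observation that $c_R$ is the only critical point that can appear in any $\xi'$ (and appears at most once along a forward orbit, by the strict preperiodicity of $W_R'$), is exactly the right way to make the assertion rigorous.

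One small point deserves a sentence of justification that you leave implicit. You reduce the claim ``$\xi$ is the closure of a \emph{single} component of $R^{-i}(\gamma)$'' to showing that at every iterated preimage of an endpoint $z_R$ or $w_R$ exactly one arc-germ lands, and you verify this by checking non-criticality. That is the correct local obstruction, but strictly speaking one also needs to rule out that a connected subset of $\Xi:=\bigcup_{j\ge0}R^{-j}(\bigcup_{\gamma}\overline{\gamma})$ could contain two disjoint closed arcs $\overline{\alpha_1},\overline{\alpha_2}$ without sharing a boundary point (think of a ``comb'' where arcs accumulate). This is easily handled: any connected $C\subset\Xi$ lies in a single $\overline{U}$ by the carpet property, and applying $R^k$ for a suitable $k$ (so that $R^k(C)$ meets a small linearizing disk about the attracting point, where $\Xi$ consists of the single arc $R^j(\gamma_R)$) forces $R^k(C)$ into one arc; pulling back along the non-critical endpoints then shows $C$ itself lies in one $\overline{\alpha}$. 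With this remark, your proof is complete and correct.
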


\subsection*{Step 2: Pushing $c_R$ to the boundary of a Fatou domain}
Consider the pinching path $R_t=\phi_t\circ R\circ \phi_t^{-1}, t\geq 0$ in $\mathcal{H}$ supported on $\Gamma$. By Theorem \ref{thm:pinching}, the path $R_t$ converges uniformly to a geometrically finite rational map $g:=R_\infty\in\partial\HHH$ as $t\to\infty$, with every component of $\cup_{i\geq0}R^{-i}(\cup_{\gamma\in\Gamma}\overline{\gamma})$ collapsing to a point; $\phi_t$ converges uniformly to a continuous onto map $\phi_\infty$; and the labeled points $c_{R_t}:=\phi_t(c_R), z_{R_t}:=\phi_t(z_R), w_{R_t}:=\phi_t(w_R)$ converge to $c_g, z_g, w_g$, respectively. Together with Fact \ref{fact:1}, we get a precise description of the dynamics of $g$.
\begin{fact}\label{fact:2}
	The followings hold:
\begin{enumerate}
	\item the points $z_g$ and $w_g$ coincide and is a parabolic point of $g$ of period $p$; the critical point $c_g$ belongs to $\mathcal{J}_g$ with $g^{n+1}(c_g)=z_g$;
	\item  considering the restriction $\phi_\infty|_{\mathcal{J}_R}:\mathcal{J}_R\to \mathcal{J}_g$, for a point $x\in\mathcal{J}_g$,
%	$$\#(\phi_\infty|_{\mc{J}_R})^{-1}(x)>1\Leftrightarrow g^{i}(x)=c_g\tu{ for some }i\geq 0\Leftrightarrow \#(\phi_\infty|_{\mc{J}_R})^{-1}(x)\subset W\tu{ and }R^i(W)=W_R'\tu{ for some Fatou domain $W$ of $R$ and some $i\geq 0$}.$$
	$\#\phi_\infty^{-1}(x)>1$ if and only if $g^i(x)=c_g$ for some $i\geq0$; and if $\#\phi_\infty^{-1}(x)>1$, then $(\phi_{\infty}|_{\mathcal{J}_R})^{-1}(x)$ is contained in the boundary of a Fatou domain $U$ of $R$ such that $R^i(W)=W_R'$ for some $i\geq0$;
\item each Fatou domain $U$ of $R$ corresponds to finitely many Fatou domains of $g$, say $$U_1(g),\ldots,U_s(g)\tu{ for some $s\geq 1$}$$ under the relation that $\phi_{\infty}(\overline{U})=\cup_{i=1}^s\overline{U_i(g)}$. Clearly by Fact \ref{fact:1} when $\overline{U_k(g)}\cap \overline{U_{\ell}(g)}\not=\emptyset$ for some $k\not=\ell$, this intersection is a singleton and is a iterated preimage of $c_g$ under $g$.
\item every Fatou domain of $g$ is a Jordan domain and $\phi_\infty(\overline{U})\cap \phi_{\infty}(\overline{U'})=\emptyset$ for a pair of distinct Fatou domains $U$ and $U'$ of $R$.
\end{enumerate}
\end{fact}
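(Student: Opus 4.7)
The plan is to derive all four parts of Fact \ref{fact:2} from Theorem \ref{thm:pinching} and Fact \ref{fact:1}, supplemented by a topological analysis of how the connected components of $\bigcup_{i\geq 0} R^{-i}(\bigcup_{\gamma\in\Gamma}\overline{\gamma})$ sit inside the Fatou domains of $R$. For (1), I would first observe that $\gamma_R$ joins $z_R$ (attracting, $R^p$-fixed) to $w_R$ (repelling, $R^p$-fixed) and, by Theorem \ref{thm:pinching}, collapses to a single point; thus $z_g = \phi_\infty(z_R) = \phi_\infty(w_R) = w_g$. The semi-conjugacy forces the $g$-period of $z_g$ to divide $p$, and equality holds because the Sierpi\'nski-carpet condition (pairwise disjoint Fatou-domain closures) guarantees that $\gamma_R, R(\gamma_R),\ldots,R^{p-1}(\gamma_R)$ collapse to \emph{distinct} points; parabolicity is the standard output of pinching an arc joining an attracting cycle to a repelling one. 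The remaining clauses of (1) then follow: by construction $R^{n+1}(c_R)\in\gamma_R$, so $g^{n+1}(c_g)=\phi_\infty(R^{n+1}(c_R))=z_g$; $c_g$ is critical as a limit of the critical points $c_{R_t}$, since local degree persists under uniform convergence (argument principle); and $c_g\in\mathcal{J}_g=\phi_\infty(\mathcal{J}_R)$ because $c_R$ sits on a star-like tree component (Fact \ref{fact:1}) whose endpoints on $\mathcal{J}_R$ collapse alongside $c_R$.

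For (2), Theorem \ref{thm:pinching} says $\#\phi_\infty^{-1}(x)>1$ iff $\phi_\infty^{-1}(x)$ is a single component $\xi$ of the backward orbit, and Fact \ref{fact:1} classifies $\xi$ as a closed arc (orbit avoiding $c_R$) or a star-like tree with branch point an iterated preimage of $c_R$ (orbit meeting $c_R$). The key added ingredient is an endpoint count after intersecting with $\mathcal{J}_R$: an arc has exactly one Julia-set endpoint (the one lying over $w_R$; the other lies over $z_R\in\mathcal{F}_R$), while a star-like tree has one Julia-set endpoint per branch reaching the ambient Fatou-domain boundary, and the number of such branches equals the local degree of the relevant iterate of $R$ at the branch point, which is at least two. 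Hence $\#(\phi_\infty|_{\mathcal{J}_R})^{-1}(x)>1$ iff $\xi$ is a tree iff the $R$-orbit of $\xi$ meets $c_R$ iff, by semi-conjugacy, $g^i(x)=c_g$ for some $i\geq 0$. The containment assertion follows because the Sierpi\'nski-carpet disjoint-closure property forces each component to lie in a single $\overline{U}$; the branch point is then the iterated preimage $R^{-i}(c_R)\in U$, so $R^i(U)=W_R'$.

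For (3) and (4), I fix a Fatou domain $U$ of $R$ and analyze how the components $\xi_1,\ldots,\xi_s\subseteq\overline{U}$ partition $U$. Only finitely many components live in $\overline{U}$: every tree's branch point is an iterated preimage of $c_R$ in $U$, requiring $R^i(U)=W_R'$ for some $i\geq 0$; since $W_R'$ is strictly preperiodic, the index $i$ is unique, and the finite-degree map $R^i|_U$ contributes only finitely many such preimages (and a similar argument controls the arc components). A closed arc from an interior to a boundary point of the Jordan disk $U$ does not separate $U$, whereas a star-like tree with $k$ boundary-reaching branches cuts $U$ into $k$ Jordan sectors; collapsing each $\xi_j$ to a point via $\phi_\infty$ converts these sectors into pairwise disjoint Jordan sub-domains $U_1(g),\ldots,U_s(g)\subseteq\mathcal{F}_g$ with $\phi_\infty(\overline{U})=\bigcup_j\overline{U_j(g)}$. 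Any intersection $\overline{U_k(g)}\cap\overline{U_\ell(g)}$ with $k\neq\ell$ is a single collapsed-tree point and so, by (2), an iterated preimage of $c_g$. Finally, the disjointness statement in (4) is immediate from the Sierpi\'nski-carpet disjoint-closure property: no component straddles two distinct Fatou-domain closures, so $\phi_\infty(\overline{U})\cap\phi_\infty(\overline{U'})=\emptyset$ whenever $U\neq U'$.

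The main obstacle I anticipate is the separation analysis in (3): one must rigorously verify that arc components cannot separate $U$, that tree components cut $U$ into precisely the expected number of Jordan sectors, and that $\phi_\infty$ sends each sector onto a genuine Fatou component of $g$ (not a union of several, and not a region concealing additional Julia points in its interior). The Jordan-domain claim of (4) should then follow either by applying Moore's theorem to the quotient of $\overline{U}$ by its components, or by directly tracking $\partial U$ under the monotone map $\phi_\infty$; either way it comes after the decomposition analysis is in place.
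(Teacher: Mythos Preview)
The paper does not give a standalone proof of Fact~\ref{fact:2}; it is stated immediately after the sentence ``Together with Fact~\ref{fact:1}, we get a precise description of the dynamics of $g$'' and is meant to be read as a direct consequence of Theorem~\ref{thm:pinching} and Fact~\ref{fact:1}. Your proposal is therefore not competing with a written argument but rather supplying the details the authors leave implicit, and the route you take---reading off (1) from the collapse of $\overline{\gamma_R}$ and the semi-conjugacy, deducing (2) from the arc/star-like dichotomy of Fact~\ref{fact:1} via an endpoint count on $\mathcal{J}_R$, and obtaining (3)--(4) from the Jordan-domain/Sierpi\'nski-carpet structure of $\mathcal{F}_R$---is exactly the intended one.

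Your outline is correct. A couple of small sharpenings that will make the write-up cleaner: for the finiteness claim in (3), it suffices to note that if $U$ lies in the grand orbit of the cycle $U_{R,0},\dots,U_{R,p-1}$ with minimal $k$ such that $R^k(U)=U_{R,j}$, then every component of the backward orbit lying in $\overline{U}$ is a component of $(R^k|_{\overline U})^{-1}\bigl(\overline{R^j(\gamma_R)}\bigr)$, hence there are only finitely many since $R^k|_U$ has finite degree; the invariance $R^p(\gamma_R)=\gamma_R$ guarantees that higher iterates contribute nothing new inside $U$. For the Jordan-domain claim in (4), the monotone map $\phi_\infty$ restricted to $\overline{U}$ is precisely the quotient by the (finite) collection of components $\xi_j\subset\overline{U}$, so Moore's theorem on the closed disk gives the result directly; there is no need to track $\partial U$ separately. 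The obstacle you flag---checking that each sector of $U\setminus\bigcup_j\xi_j$ maps onto a full Fatou component of $g$ rather than part of one---is handled by the last bullet of Theorem~\ref{thm:pinching}: $\phi_\infty$ is holomorphic (hence open) on $\mathcal{F}_R\setminus\mathcal{U}_\Gamma$, and a semi-conjugacy sending open Fatou pieces to open sets and Julia to Julia must respect Fatou components.
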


\subsection*{Step 3: Plumbing to get a subhyperbolic rational map}

We want to perturb $g$ to a subhyperbolic rational map without changing its dynamics on the Julia set. By Theorem \ref{thm:plumbing}, this can be realized by a plumbing surgery. Precisely,

 \begin{fact}\label{fact3}
 There exists a subhyperbolic rational map $f$ and an admissible family $\Gamma_f=\{\gamma_f,\ldots, f^{p-1}(\gamma_f)\}$ with $\gamma_f\cap\tu{post}(f)=\emptyset$ in a Fatou cycle $U_{f,0},\ldots,U_{f,p-1}$ of period $p$, such that the pinching path
$$g_t=\psi_t\circ f\circ \psi_t^{-1}, t\geq 0$$
supported on $\Gamma_f$ satisfies
\begin{enumerate}
\item $g_t\to g,\ \psi_t\to\psi_\infty$ as $t\to\infty$;
\item each component of $\cup_{i\geq0}f^{-i}(\cup_{\gamma\in\Gamma_f}\overline{\gamma})$ intersects $\mathcal{J}_f$ in exactly one point, thus
$\psi_\infty:\mathcal{J}_f\to \mathcal{J}_g$ is a conjugacy between $f$ and $g$;
\item the closure of each Fatou domain of $f$ is one-to-one corresponding to that of $g$ under the map $\psi_\infty$;
\item the point $c_f:=(\psi_\infty|_{\mc{J}_f})^{-1}(c_g)$ is the unique critical point of $f$ in $\mathcal{J}_f$ and $w_f:=f^{n+1}(c_f)$, the endpoint of $\gamma_f\subseteq U_{f,0}$ in $\mathcal{J}_f$,  is a repelling point of period $p$.
\end{enumerate}
\end{fact}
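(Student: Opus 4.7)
The plan is to apply Theorem~\ref{thm:plumbing} to the geometrically finite map $g$ with a carefully chosen plumbing combinatoric, then verify the four properties in order. The overall strategy is to use the plumbing as an ``inverse'' of the pinching performed in Step~2, but with a decisive difference: the plumbing must be set up so that the critical point $c_g\in\mc{J}_g$ lifts to a critical point $c_f$ that remains on $\mc{J}_f$, rather than being pushed into a Fatou domain (as the original $c_R$ lay in $W_R'$ for $R$).

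First I choose the plumbing combinatoric $\sigma$ of $g$. By Fact~\ref{fact:2}(1), $g$ has a unique parabolic cycle at $z_g$ of period $p$, obtained from pinching $\Gamma$; the parabolic multiplicity is $1$, since only one orbit of arcs was pinched. Accordingly, I take $\sigma$ to plumb exactly this one orbit of parabolic petals, producing $p$ plumbing arcs that together form a single $f$-orbit. Theorem~\ref{thm:plumbing} applied to $(g,\sigma)$ then yields a subhyperbolic rational map $f$, an admissible family $\Gamma_f=\{\gamma_f,f(\gamma_f),\ldots,f^{p-1}(\gamma_f)\}$ contained in some Fatou cycle $U_{f,0},\ldots,U_{f,p-1}$ of period $p$, and a pinching path $g_t=\psi_t\circ f\circ\psi_t^{-1}$ supported on $\Gamma_f$ with $g_t\to g$ and $\psi_t\to\psi_\infty$ as $t\to\infty$; this yields~(1). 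The condition $\gamma_f\cap\tu{post}(f)=\es$ is arranged by a small perturbation of the interior of $\gamma_f$ (within its homotopy class rel.\ endpoints) avoiding the countable set $\tu{post}(f)\cap U_{f,0}$, which accumulates only at the attracting periodic point.

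For property~(2), I describe the components of $\cup_{i\geq 0}f^{-i}(\cup_{\gamma\in\Gamma_f}\ol{\gamma})$ via an analogue of Fact~\ref{fact:1}: each component is either a closed arc or a star-like tree whose unique branching vertex is an iterated preimage of $c_f$. An arc component has one endpoint at a pre-attracting point (in $\mc{F}_f$) and one at a pre-$w_f$ point (in $\mc{J}_f$), so it meets $\mc{J}_f$ in exactly one point. For a star-tree component, the choice of plumbing (which acts only at the parabolic orbit of $z_g$, never at the regular Julia point $c_g$) forces the branching vertex to lie on $\mc{J}_f$, with all other leaves at pre-attracting points in $\mc{F}_f$; combined with the geometric attraction of the new cycle and $\gamma_f\cap\tu{post}(f)=\es$, this excludes any Fatou-set critical point from lying on the preimage tree. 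Hence every component meets $\mc{J}_f$ in exactly one point, and the explicit formula for the fibers of $\psi_\infty$ in Theorem~\ref{thm:pinching} upgrades $\psi_\infty|_{\mc{J}_f}:\mc{J}_f\to\mc{J}_g$ from a semi-conjugacy to a conjugacy.

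Properties~(3) and~(4) then follow by transport through this conjugacy. Because the plumbing surgery is local at the orbit of $z_g$, the new Fatou cycle $U_{f,0},\ldots,U_{f,p-1}$ corresponds under $\psi_\infty$ exactly to the Fatou domains of $g$ hosting the attracting petals at $z_g$, and every other Fatou domain of $f$ maps homeomorphically via $\psi_\infty$ to a Fatou domain of $g$, giving the one-to-one correspondence in~(3). For~(4), the conjugacy $\psi_\infty|_{\mc{J}_f}$ preserves local degrees, so $c_f:=(\psi_\infty|_{\mc{J}_f})^{-1}(c_g)$ is a critical point of $f$ on $\mc{J}_f$, and its uniqueness descends from that of $c_g$ on $\mc{J}_g$, which ultimately comes from the hypothesis that $c_R$ is the unique critical point in $\cup_i R^{-i}(\{c_R,z_R\})$. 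The identity $g^{n+1}(c_g)=z_g$ from Fact~\ref{fact:2}(1) transports to $f^{n+1}(c_f)=w_f$, since $w_f$ is by construction the repelling side of the plumbed parabolic point $z_g$. The main obstacle is property~(2): making the combinatorial matching between the original pinching of $R$ and the plumbing of $g$ precise enough to guarantee $c_f\in\mc{J}_f$ and, with it, the upgrade from semi-conjugacy to conjugacy on the Julia sets.
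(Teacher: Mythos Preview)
The paper does not give a proof of Fact~\ref{fact3}: it simply states the fact after the sentence ``By Theorem~\ref{thm:plumbing}, this can be realized by a plumbing surgery,'' treating all four properties as direct consequences of the plumbing machinery in \cite{CT18}. Your proposal is therefore more detailed than the paper's own treatment, and your overall strategy---apply Theorem~\ref{thm:plumbing} with the (essentially unique) plumbing combinatoric at the multiplicity-one parabolic cycle of $g$, obtain (1) immediately, and deduce (3) and (4) from (2) via the conjugacy $\psi_\infty|_{\mc{J}_f}$---is correct and matches what the paper intends.

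One point in your handling of (2) deserves sharpening. Your ``analogue of Fact~\ref{fact:1}'' is structurally different from the $R$-case: there the open arc $\gamma_R$ was deliberately built to pass through $\tu{orb}(c_R)$, so branching of its lifts occurs in the \emph{interior}, at a Fatou point; here, since $\gamma_f\cap\tu{post}(f)=\emptyset$, lifts of the open arc never branch, and any branching in a component of $\bigcup_{i\ge0}f^{-i}(\bigcup_{\gamma\in\Gamma_f}\ol{\gamma})$ is confined to the endpoints. Branching at a Julia-side preimage of $w_f$ (i.e.\ at an iterated preimage of $c_f$) is harmless for (2), as you say. But branching at a Fatou-side preimage of $z_f$ would produce a component meeting $\mc{J}_f$ in several points, and your appeal to ``geometric attraction of the new cycle and $\gamma_f\cap\tu{post}(f)=\emptyset$'' does not by itself exclude this: those hypotheses rule out critical points on the open arc and on the attracting cycle, but not among the \emph{strict} preimages of $z_f$. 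What actually prevents it is the standing normalization from the beginning of Section~\ref{proof}---that $c_R$ is the unique critical point in $\bigcup_{i\ge0}R^{-i}(\{c_R,z_R\})$---together with the property recorded in Step~1 that the arcs in $\Gamma$ avoid the orbits of all critical points of $R$ except $c_R$. Transported through $\phi_\infty$ and the plumbing, these guarantee that no Fatou-set critical point of $f$ lies over $z_f$, and hence that every component meets $\mc{J}_f$ in a single point.
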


The key point for the proof of Theorem \ref{maintheorem} is to show that the pinching path $g_t$ belongs to $\partial \mathcal{H}$. Since one can regard an arbitrary $g_{t_0}$ as the initial map $f$ of the pinching path, it reduces to prove the following.

\begin{prop}\label{prop:boundary}
	$f\in \partial \mathcal{H}$.
\end{prop}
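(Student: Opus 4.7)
My plan is to prove $f\in\partial\HHH$ by first observing that $f\notin\HHH$ and then constructing a sequence of hyperbolic maps in $\HHH$ that converges to $f$. The first step is immediate: by Fact \ref{fact3}(4), the critical point $c_f$ of $f$ lies in $\mathcal{J}_f$, so $f$ is not hyperbolic and hence $f\notin\HHH$.

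For the approximation I apply Lemma \ref{lem:isotopy-annulus} in an exhaustion. First I pick a preperiodic attracting Fatou domain $V_f$ of $f$ together with a corresponding preperiodic Fatou domain $V_R$ of $R$, choosing $V_R$ outside the backward orbit of $W_R'$. By Fact \ref{fact:2}(2) the semi-conjugacy $\psi_\infty^{-1}\circ\phi_\infty:\mathcal{J}_R\to\mathcal{J}_f$ is injective on $\partial V_R$, and it extends to the boundary homeomorphism $\eta:\tu{orb}(\partial V_R)\to\tu{orb}(\partial V_f)$ conjugating $R$ and $f$ that Lemma \ref{lem:isotopy-annulus} requires. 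I then exhaust $V_f$ by nested quasi-disks $B_f^{(n)}\Subset V_f$ satisfying the degree condition \eqref{eq:deg}, and for each $n$ apply Lemma \ref{lem:isotopy-annulus} to produce a hyperbolic rational map $\rho_n$ together with a quasiconformal map $h_n$ and homeomorphisms $\eta_0^{(n)}, \eta_1^{(n)}$. Since $\rho_n$ is topologically conjugate to $R$ on $\mathcal{J}_R$ via $h_n$, Lemma \ref{lem:same-component} places $\rho_n$ in $\HHH$. To identify the limit, I combine the conjugacy relation $\eta_0^{(n)}\circ\rho_n=f\circ\eta_1^{(n)}$ with the fact that $\eta_0^{(n)}$ is conformal on the growing set $\tu{orb}(B_{\rho_n})$; compactness of $\tu{Rat}_d$ yields a subsequential limit $\rho_\infty$, and invoking Lemma \ref{lem:isotopy} on the limiting data shows that $\rho_\infty$ is c-equivalent to $f$. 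Theorem \ref{theorem:thurston} then forces $\rho_\infty\cong f$, so $\rho_n\to f$ along a subsequence---and by uniqueness of the limit along the whole sequence.

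The main obstacle I anticipate is verifying the convergence $\rho_n\to f$ itself. Each $\rho_n$ has Julia set quasiconformally equivalent to $\mathcal{J}_R$, which is a Sierpi\'nski carpet, while by Fact \ref{fact:2}(3) the set $\mathcal{J}_f$ has pairs of Fatou domain closures meeting at iterated preimages of $c_f$. So the Julia sets must degenerate topologically in the limit, forcing the dilatations of the surgery maps $h_n$ to blow up along the exhaustion. Controlling this degeneration---in particular, ensuring that the algebraic limit in $\tu{Rat}_d$ really is $f$ and not some other accumulation point---is the delicate step, and I expect it to require careful estimates on the gluing annuli in the surgery combined with the explicit description of limiting semi-conjugacies from Theorem \ref{thm:pinching}.
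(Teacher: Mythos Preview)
Your approach has a genuine gap: letting $B_f^{(n)}$ exhaust $V_f$ gives you no mechanism to make $\rho_n$ converge to $f$. Lemma~\ref{lem:isotopy-annulus} transplants only the \emph{interior Fatou dynamics} of $f$ on $\tu{orb}(V_f)$ into $R$; the Julia set of every $\rho_n$ is $h_n(\mathcal J_R)$, still a Sierpi\'nski carpet, and the critical point $h_n(c_R)$ sits in the Fatou domain $h_n(W_R')$ with its critical value $h_n(v_R)$ at a fixed interior point of $W_R$ pushed forward by $h_n$. Nothing in the exhaustion forces $h_n(v_R)$ toward the boundary, which is exactly what would be needed for $\rho_n\to f$ (since $v_f\in\partial W_f$). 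The datum you are varying---the size of $B_f^{(n)}$---controls the conformal model on $\tu{orb}(W_{\rho_n})$ but never sees $c_f$, which lies in $\mathcal J_f$ and hence outside every $B_f^{(n)}$. So the ``degeneration'' you anticipate simply does not occur from this construction; you would at best move within $\mathcal H$, not toward $f$. (Two smaller issues compound this: $\tu{Rat}_d$ is not compact, so subsequential limits need not exist in $\tu{Rat}_d$; and even granting a limit $\rho_\infty$, you have no control on the limiting $\eta_i$ needed to feed Lemma~\ref{lem:isotopy}.)

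The paper's argument supplies the missing idea. Instead of varying the surgery, one fixes a single $B_f\Subset W_f$ and perturbs $f$ itself: compose with a quasiconformal map supported on a shrinking disk $D_r\ni v_f$ that pushes $v_f$ to a point $v_r\in W_f$, obtaining a quasi-regular semi-rational map $F_r$ with $F_r\to f$ uniformly as $r\to 0$. One then uses Lemma~\ref{lem:isotopy-annulus} (with the \emph{fixed} $B_f$) to manufacture $R_r\in\mathcal H$ whose critical value in $W_{R_r}$ matches $v_r$ under the conformal identification, and builds explicit monotone maps $\eta_0,\eta_1$ (collapsing a tree $\beta'\subset W_{R_r}'$ to $c_f$) to show via Lemma~\ref{lem:isotopy} that $R_r$ is c-equivalent to $F_r$. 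The convergence $R_r\to f$ then follows from a general stability result for c-equivalences under small perturbations (\cite[Proposition~3.3]{Gao19} or \cite{GZ18}), not from Thurston rigidity. The essential point you are missing is that the approximating hyperbolic maps must be built as small perturbations \emph{of $f$}, with the parameter $r$ measuring how far the critical value has been pushed off the Julia set.
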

\begin{proof}
	The strategy is as follows: we first disturb $f$ to a quasi-regular map $F_r$; and then prove that $F_r$ is c-equivalent to a normalized rational map $R_r\in \mathcal{H}$; finally a known result implies that $R_r$ tends to $f$ as $r\to 0$.
	
%We have three rational maps $R, g, f$, and their dynamics are related by Theorem \ref{thm:pinching}, Facts \ref{fact:2} and \ref{fact3}.

Let $W_f$ be the unique Fatou domain of $f$ whose boundary contains the critical value $v_f:=f(c_f)$ and $W'_f$ be the union of the $\tau:=\tu{deg}(f,c_f)$ components of $f^{-1}(W_f)$ whose boundaries possess $c_f$. Now we choose a Jordan disk $D_r$ around $v_f$ such that
	\begin{itemize}
		\item $\textup{diam }D_r<r$;
		\item $\partial D_r\setminus\overline{W_f}$ is an open arc buried in $\mathcal{J}_f$, i.e., it is disjoint from the boundary of each Fatou domain.
	\end{itemize}
	Let $\xi_r:\widehat{\mathbb{C}}\to\widehat{\mathbb{C}}$ be a quasiconformal map which is identity outside of $D_r$ and sends $v_f$ to a point, namely $v_{r}$, within $D_r\cap W_f$. Then the quasi-regular map $F_r$ is defined as
	$$F_r:=\xi_r\circ f:\widehat{\mathbb{C}}\to\widehat{\mathbb{C}}.$$
	Clearly $F_r$ coincides with $f$ except on $D'_r$, which is the component of $f^{-1}(D_r)$ containing $c_f$. Hence $F_r$ is semi-rational.
\begin{lem}\label{lem:equivalent}
		The map $F_r$ is c-equivalent a hyperbolic rational map $R_r$ in $\mc{H}$.
	\end{lem}
\begin{proof}
We have three rational maps $R, g$ and $ f$. Their dynamics are related by Theorem \ref{thm:pinching}, Facts \ref{fact:2} and \ref{fact3}, with $g$ the intermediate rational map:
\[R\overset{(R_t, \phi_t)}{\longrightarrow}g\overset{(g_t,\psi_t)}{\longleftarrow}f.\]
Recall that $W_R'$ is the Fatou domain of $R$ containing $c_R$ and  $W_R=R(W_R')$. By Fact \ref{fact:2} (2) and Fact \ref{fact3} (3), it holds that $\phi_\infty(\ol{W_R})=\ol{W_g}=\psi_\infty(\ol{W_f})$.

For each $\delta\in\{R,f,g\}$, let $\UUU_\delta$ denote the grand orbit of $W_\delta$, i.e., the union of Fatou domains $U$ of $\delta$ such that $\delta^i(U)=\delta^j(W_\delta)$ for some $i$ and $j$. For simplicity of the statement, we assume that
\begin{equation}\label{eq:assumption1}
\emph{$c_R$ is the unique critical point in the preperiodic Fatou domains of $\UUU_R$.}
\end{equation}
By Theorem \ref{thm:pinching}, Fact \ref{fact:2} (2) and Fact \ref{fact3} (2), we conclude that
 \begin{fact}\label{fact:4}
 The followings hold:
 \begin{enumerate}
\item  $\eta:=\psi_\infty^{-1}\circ\phi_\infty:\widehat{\mathbb{C}}\setminus\mathcal{U}_R\to \widehat{\mathbb{C}}\setminus\mathcal{U}_f$ is a semi-conjugacy between $R$ and $f$; %$f\circ\eta=\eta\circ R$ on $\widehat{\mathbb{C}}\setminus\mathcal{U}_R$.
\item the restriction $\eta:\mc{F}_R\setminus \UUU_R\to\mc{F}_f\setminus\UUU_f$ is a conformal isomorphism;

\item If the orbit of a Fatou domain $U_R\subseteq \mc{U}_R$ avoids $W_R'$, then $\eta(\partial U_R)$ is the boundary of a Fatou domain in $\UUU_f$; otherwise, $\eta(\partial U_R)$ is the union of boundaries of $\tau$ Fatou domains of $f$ in $\UUU_f$.
  \end{enumerate}
\end{fact}

 Choose a quasi-disk  $B_f$ compactly contained in $W_f$ such that $f^i(B_f)(=F^i_r(B_f))$, $0\leq i\leq n+p$, are quasi-disks satisfying
 \begin{equation}\notag
 \overline{f^{n+p}(B_f)}\subseteq f^{n}(B_f).
 \end{equation}
 Assume further that $B_f$ is so large that $\tu{orb}(B_f)$ covers the set $$\tu{post}(F_r)\cap \tu{orb}(W_f).$$
 %$\textup{orb}(v_{f,r},F_r)\cup\textup{orb}(c_f',F_r)$.

Since $\eta:\tu{orb}(\partial W_R)\to\tu{orb}(\partial W_f)$ is a conjugacy between $R$ and $f$ by Fact \ref{fact:4} (1)(3), we can apply Lemma \ref{lem:isotopy-annulus} to combine the two sub-dynamics: $f$ on $\tu{orb}(W_f)$ and $R$ on $\wh{\mb{C}}\setminus \mc{U}_R$ to produce a new rational map $R_r$, where $r$ measures the position of the critical value of $R_r$ in $W_{R_r}$. According to Lemma \ref{lem:same-component}, $R_r\in\mc{H}$. Precisely, there is a rational map $R_r\in\HHH$, a quasiconformal map $h:\wlC\to\wlC$ and two homeomorphisms $\eta_0,\eta_1:\tu{orb}(\overline{W_{R_r}})\to\tu{orb}(\overline{W_f})$, such that
\begin{fact}\label{fact6}
	 The followings hold:
\begin{enumerate}
	\item the restriction $h:\wlC\setminus\UUU_{R}\to\wlC\setminus\UUU_{R_r}$ is a conjugacy between $R$ and $R_r$, where $\mc{U}_{R_r}$ is the grand orbit of the Fatou domain $W_{R_r}:=h(W_R)$ of $R_r$;
	\item the restriction $h:\mc{F}_R\setminus\mc{U}_R\to \mc{F}_{R_r}\setminus\mc{U}_{R_r}$ is conformal;
\item $\eta_0$ is isotopic to $\eta_1$ rel. $\tu{orb}(\ol{B_{R_r}})\cup \tu{orb}(\partial W_{R_r})$, where ${B}_{R_r}:=\eta_0^{-1}(B_f)\Subset W_{R_r}$ satisfying \eqref{eq:deg} for $R_r$;
\item the restriction $\eta_0:\tu{orb}(B_{R_r})\to \tu{orb}(B_R)$ is conformal;

\item  $\eta_0=\eta\circ h^{-1}$ on $\tu{orb}(\partial W_{R_r})$;
\item $\eta_0\circ R_r=f\circ \eta_1$\text{ on $\tu{orb}(\overline{W_{R_r}})$};
	\end{enumerate}
moreover, by a standard quasiconformal surgery on the Fatou domain $W_{R_r}':=h(W_R')$ if necessary, we can also assume

\ \ (7) $c_{R_r}$ is the unique critical point in $W_{R_r}'$ and $v_{R_r}:=R_r(c_{R_r})\in W_{R_r}$ is mapped by $\eta_0$ to the critical value $v_r$ of $F_r$.
\end{fact}
In the following, we will continuously extend $\eta_0$ and $\eta_1$ to  $\wlC$ such that $\eta_0\circ R_r=F_r\circ \eta_1$ holds on $\wlC$. Firstly, by Fact \ref{fact:4} (1) and Fact \ref{fact6} (1)(5), one has a continuous extension $$\eta_0:(\wlC\setminus \UUU_{R_r})\cup\tu{orb}(\overline{W_{R_r}})\to (\wlC\setminus \UUU_{f})\cup\tu{orb}(\overline{W_{f}})$$ by setting
$\eta_0(z)=\eta\circ h^{-1}(z)\tu{ for all $z\in \wlC\setminus \UUU_{R_r}$}.$
According to Fact \ref{fact:4} and Fact \ref{fact6} (1)(2), we conclude that
 \begin{fact}\label{fact:5}
 	The followings hold:
 \begin{enumerate}
 \item  the restriction $\eta_0:\wh{\mb{C}}\setminus\mc{U}_{R_r}\to \wh{\mb{C}}\setminus\mc{U}_f$ is a semi-conjugacy between $R_r$ and $f$;
\item the restriction $\eta_0:\mc{F}_{R_r}\setminus \UUU_{R_r}\to\mc{F}_f\setminus\UUU_f$ is conformal;
 \item if the orbit of a Fatou domain $U_{R_r}$ of $R_r$ avoids $W_{R_r}'$, then $\eta_0(\partial U_{R_r})$ is the boundary of a Fatou domain of $f$; otherwise, $\eta_0(\partial U_{R_r})$ is the union of the boundaries of $\tau$ Fatou domains in $\UUU_f$.
  \end{enumerate}
\end{fact}

It remains to extend $\eta_0$ to each component $U_{R_r}$ of $\UUU_{R_r}\setminus\tu{orb}(W_{R_r})$. There are three cases:
\begin{itemize}
\item The orbit of $U_{R_r}$ is disjoint from $W_{R_r}'$. In this case, there exists a minimal $k\geq 1$ such that $R_r^k(U_{R_r})\subseteq \tu{orb}(W_{R_r})$. By Fact \ref{fact:5} (3), $\eta_0(\partial U_{R_r})$ bounds a component $U_f$ of $\mc{U}_f$. Since $R_r^k|_{U_{R_r}}$ and $f^k|_{U_f}$ are homeomorphisms by assumption \eqref{eq:assumption1}, we define $\eta_0|_{U_{R_r}}:=(f^k|_{U_f})^{-1}\circ\eta_0\circ R_r^k|_{U_{R_r}}$. By Fact \ref{fact:5} (1)(3) this extension is continuous.

\item $U_{R_r}=W_{R_r}'$.  Recall that $W_f'$ denotes the union of $\tau$ components of $f^{-1}(W_f)$ whose boundaries contain $c_f$, and $D_r$ a small disk containing $v_f$ chosen at the beginning of Proposition \ref{prop:boundary}. Fact \ref{fact:5} (3) implies $\eta_0(\partial W_{R_r}')=\partial W_f'$, and $\eta_0^{-1}(\partial D_r)$ is a Jordan curve surrounding a disk, denoted by $\Delta_r$, such that $v_{R_r}\in\Delta_r$.  Choose a closed arc $\beta$ such that $$\beta(0,1)\subseteq W_{R_r}\cap \Delta_r, \beta(0)=v_{R_r}\tu{ and }\beta(1)=\eta_0^{-1}(v_f)\in\Delta_r\cap\partial W_{R_r}.$$ Let $\beta'$ be the component of $R_r^{-1}(\beta)$ containing $c_{R_r}$. Then $\beta'$ divides $W_{R_r}'$ into $\tau$ disks. We continuously extend $\eta_0$ from $\partial W_{R_r}'$ into $W'_{R_r}$ satisfying that $\eta_0(\beta')=c_f$ and $\eta_0$ sends the $\tau$ components of $W_{R_r}'\setminus \beta'$ homeomorphically onto those of $W_f'$.

\item There exists a minimal $k\geq 1$ such that $R_r^k(U_{R_r})=W_{R_r}'$. Let $U_f$ be the union of the $\tau$ Fatou domains of $f$ bounded by $\eta_0(\partial U_{R_r})$. In this case, by assumption (\ref{eq:assumption1}), we let $\eta_0|_{U_{R_r}}:=(f^{k}|_{\ol{U_f}})^{-1}\circ \eta_0\circ R_r^k|_{U_{R_r}}$.
\end{itemize}
Thus, we obtain a continuous onto map $\eta_0:\wlC\to \wlC$ satisfying that for each $z\in\wlC$
$$\#\eta_0^{-1}(z)>1\Leftrightarrow \exists\, i\geq 0\tu{ s.t.}f^i(z)=c_f\Leftrightarrow \eta_0^{-1}(z)\tu{ is a component of }\cup_{i\geq 0}R_r^{-i}(\beta').$$
Thus $\eta_0$ is monotone.

We now extend $\eta_1$ to $\wlC$. Recall that $D_r'$ is the component of $f^{-1}(D_r)$ containing $c_r$. Assume $\Delta_r'$ is the component of $R_r^{-1}(\Delta_r)$ containing $c_{R_r}$. Then $\overline{\Delta_r'}=\eta_0^{-1}(\overline{D_r'})$ by the choice of $D_r$ and the construction of $\eta_0$. For each point $z\in\wlC\setminus \Delta_r'$, define
\[\eta_1(z):=\left\{
    \begin{array}{ll}
      \eta_1(z), & \hbox{if $z\in\tu{orb}(W_{R_r})$;} \\
      \eta_0(z), & \hbox{otherwise.}
    \end{array}
  \right.
\]
By the construction of $\eta_0$, we have $\eta_0\circ R_r(z)=f\circ \eta_1(z)$ on $\wlC\setminus \Delta_r'$. Since $F_r=f$ on $\wlC\setminus D_r'$, it follows that $\eta_0\circ R_r(z)=F_r\circ \eta_1(z)$ on $\wlC\setminus \Delta_r'$.

The extension of $\eta_1:\Delta_r'\setminus\{c_{R_r}\}\to D'_r\setminus\{c_f\}$ is defined by the following lift:
	\[
\begin{CD}
{\Delta_r'\setminus\{c_{R_r}\}}@> {\eta_1} >> {D_r'\setminus\{c_f\}}\\
@ V{R_r} VV @ VV {F_r}V\\
{\Delta_r\setminus\{v_{R_r}\}}@> {\eta_0} >> {D_r\setminus\{v_r\}}.\\
\end{CD}
\]
Finally, we let $\eta_1(c_{R_r})=c_f$. Then $\eta_1$ is a monotone map satisfying $\eta_0\circ R_r=F_r\circ \eta_1$ on $\wlC$.

 By the constructions of $\eta_0$ and $\eta_1$, we see that the restrictions
 $\eta_0,\eta_1: S_{R_r}\to S_f$ are homotopic rel. $\partial S_{R_r}$, where $$S_\delta:=\wh{\mb{C}}\setminus (\tu{post}(\delta)\cup \ol{\tu{orb}(B_{\delta})})$$ for each $\delta\in\{R_r,f\}$,
 and $$\eta_0=\eta_1:\tu{post}(R_r)\cup\ol{\tu{orb}(B_{R_r})}\to \tu{post}(f)\cup\ol{\tu{orb}(B_f)}$$ is a homeomorphism that is holomorphic in $\tu{orb}(B_{R_r})$. It then follows from Lemma \ref{lem:isotopy} that $R_r$ is c-equivalent to $F_r$. We complete the proof of Lemma \ref{lem:equivalent}.
\end{proof}	

We continue to prove Proposition \ref{prop:boundary}. Choose three distinct points $a_1,a_2,a_3\in \tu{post}(f)\cap \mathcal{F}_f$. According to Lemma \ref{lem:equivalent}, for each $r>0$, there exists a pair of homeomorphism $(\phi_{0,r},\phi_{1,r})$ by which $F_r$ is c-equivalent to $R_r$. We normalize $\phi_{0,r}$ such that it fixes $a_1,a_2,a_3$. Lemma \ref{lem:same-component} implies that such normalized rational maps $R_r$ still belong to $\HHH$. By \cite[Proposition 3.3]{Gao19} or by similar arguments in \cite[Step IV of the proof of Theorem 1.1]{GZ18}, the rational maps $R_r$ uniformly converge to $f$ as $r\to 0$, which implies $f\in\partial \HHH$.
\end{proof}

\subsection*{Step 4: Create a buried critical point}
Recall that $U_{R,0},\ldots,U_{R,p-1}$ is the attracting Fatou cycle of $R$, where the pinching deformation in Step 1 takes place. The map $\eta$, defined in Fact \ref{fact:4} (1), sends the boundary of this Fatou cycle onto that of an attracting Fatou cycle of $f$, say $U_{f,0},\ldots,U_{f,p-1}$. By a similar method as stated in Step 1, one can get a pair of
 open arcs $\{\alpha,\alpha'\}$ in $U_{f,0}$ such that
\begin{itemize}
	\item $f^p:(\alpha,\alpha')\to(\alpha', \alpha)$ are homeomorphisms;
	\item $\alpha\cap \alpha'=\emptyset$ and $(\alpha\cup\alpha')\cap \tu{post}(f)=\emptyset$;
	\item $\alpha(0)=\alpha'(0)$ is the attracting periodic point in $U_{f,0}$, and $\alpha(1)\not=\alpha'(1)\in\partial U_{f,0}$ lie in a repelling cycle of period $2p$.
\end{itemize}

It is clear that $\wt{\Gamma}:=\{f^i(\alpha),f^i(\alpha'):0\leq i\leq p-1\}$ forms an admissible family of $f$. By Theorem \ref{thm:pinching}, we obtain a pinching path $f_t:=\zeta_t\circ f\circ\zeta_t^{-1}, t\geq 0$ supported on $\wt{\G}$ converging to a geometrically finite rational map $f_\infty$, such that
\begin{enumerate}
\item the set $\mathcal{O}=\{\zeta_\infty(f^i(\overline{\alpha\cup\alpha'})):0\leq i\leq p-1\}$ forms a unique parabolic cycle of $f_\infty$, with period $p$ and multiplier $-1$;
\item if a Fatou domain $U$ of $f$ is not in $\UUU_f$, then $\zeta_\infty(U)$ is an eventually attracting Fatou domain of $f_\infty$; otherwise, the set $U\setminus \cup_{i\geq 0}f^{-i}(\ol{\alpha\cup\alpha'})$ consists of infinitely many components, and these components are sent by $\zeta_\infty$ bijectively onto Fatou domains of $f_\infty$, which are eventually iterated under $f_\infty$ into the unique parabolic Fatou cycle of period $2p$;
\item $\zeta_\infty(\overline{U})\cap \zeta_\infty(\overline{U_{f,0}})=\emptyset$ for a Fatou domain $U\neq U_{f,0}$ of $f$.
\end{enumerate}
%By Proposition \ref{prop:boundary}, it holds that $f_t\in\partial\mc{H}$ for $t\geq 0$, and thus $f_\infty\in \pa \mc{H}$.

Note that $c_{f_\infty}:=\zeta_\infty(c_f)$ is the unique  critical point of $f_\infty$ in $\mc{J}_{f_\infty}$, and $w_{f_\infty}:=f^{n+1}_\infty(c_{f_\infty})
\in\zeta_\infty(\partial U_{f,0})$ is a repelling point of $f_\infty$ of period $p$. By statements (2)(3) above, the point  $w_{f_\infty}$ does not belong to the boundary of any Fatou domain of $f_\infty$, and so is $c_{f_\infty}$. Applying the same arguments in Proposition \ref{prop:boundary} to each subhyperbolic $f_t$, we see that the pinching path $f_t, t>0$ is contained in $\partial \mc{H}$. Thus the limit $f_\infty\in\partial \mc{H}$. Therefore, we obtain a geometrically finite map in $\partial \HHH$ with the buried critical point $c_{f_\infty}$. The proof of the main theorem is completed.
\section{Appendix}\label{app}
\begin{proof}[Proof of Lemma \ref{lem:isotopy-annulus}]
	Let $B_R$ be a quasi-disk in $W_R$ satisfying \eqref{eq:deg} for $R$. For each $\delta\in\{f,R\}$, we denote by $$\tb{W}_\delta=\cup_{i=1}^{n+p-1}\delta^i(W_\delta), \wt{\tb{W}}_\delta=\cup_{i=0}^{n+p-1} \delta^i(W_\delta)\tu{ and }\wt{\tb{B}}_\delta=(\delta|_{\wt{\tb{W}}_\delta})^{-1}(\cup_{i=1}^{n+p-1}\delta^i(B_\delta)).$$
	
	Take a conformal isomorphism $\alpha_1: \wt{\tb{B}}_f\to \wt{\tb{B}}_R$ which sends the component of $\wt{\tb{B}}_f$ in $f^i(W_f)$ onto that of $\wt{\tb{B}}_R$ in $R^i(W_R)$ for each $0\leq i\leq n+p-1.$ Let $\alpha_0: \tb{B}_f\to\tb{B}_R$ be the restriction of $\alpha_1$ on $\tb{B}_f:=f(\wt{\tb{B}}_f)\subseteq\wt{\tb{B}}_f$ and $\tb{B}_R:=\alpha_1(\tb{B}_f)$. We then introduce a holomorphic map $\mc{R}:\wt{\tb{B}}_R\to \tb{B}_R$ induced by the following diagram:
	\[
	\begin{CD}
	{\wt{\tb{B}}_f}@> {\alpha_1} >> {\wt{\tb{B}}_R}\\
	@ V{f} VV @ VV {\mc{R}}V\\
	{\tb{B}_f}@> {\alpha_0} >> {\tb{B}_R}.\\
	\end{CD}
	\]
	Let us set $\mc{R}|_{\wh{\mb{C}}\setminus \wt{\tb{W}}_R}=R|_{\wh{\mb{C}}\setminus \wt{\tb{W}}_R}$. In what follows, we will extend $\mc{R}$ as a quasi-regular map of $\wh{\mb{C}}$.
	
	For each $\delta\in\{f,R\}$, pick a quasi-circle $\gamma_\delta$ essentially contained in the annulus $$\mb{A}(\delta^n(\partial B_\delta),  \delta^{n+p}(\partial B_\delta)),$$ where $\mb{A}(\beta_1,\beta_2)$ denotes the annulus bounded by the Jordan curves $\beta_1$ and $\beta_2$. 
	%We require further that $\gamma_R$ surrounds the image $\mc{R}^{n+p}(B_R)$. 
	From condition \eqref{eq:deg}, there exists a unique quasi-circle $\gamma_\delta^{i}$ in $\delta^{i}(W_\delta)$ such that $\delta^{n+p-i}(\gamma_\delta^i)=\gamma_\delta$ for each $1\leq i\leq n+p-1$. 
	When $\delta=f$, clearly $\gamma_\delta^i$ surrounds the disk $B_\delta^i:=\tb{B}_\delta\cap \delta^i(W_\delta)$, i.e., $B_\delta^i$ is contained in a complement component of $\gamma_\delta^i$; this still holds for $\delta=R$ if we choose the circle $\gamma_\delta$ sufficiently close to  $\delta^n(\partial B_\delta)$.
	
	Note that the set $\wt{\tb{W}}_\delta$ (resp. $\tb{W}_\delta$) can be decomposed into the three disjoint open sets
	$\wt{\tb{A}}_\delta, \wt{\tb{H}}_\delta, \wt{\tb{B}}_\delta$ (resp. ${\tb{A}}_\delta, {\tb{H}}_\delta, {\tb{B}}_\delta$), whose components are either disks or annuli, if we let
	$$\tb{A}_\delta:=\cup_{i}\mb{A}(\delta^i(\partial W_\delta),\gamma_\delta^i),~~~ \tb{H}_\delta:=\cup_{i}\mb{A}(\gamma_\delta^i, \partial B_\delta^i)=:\cup_{i} H_\delta^i,$$
	$\wt{\tb{A}}_\delta:=(\delta|_{\wt{\tb{W}}_\delta})^{-1}\tb{A}_\delta$ and $\wt{\tb{H}}_\delta:=\wt{\tb{W}}_\delta\setminus\ol{\wt{\tb{A}}_\delta\cup\wt{\tb{B}}_\delta}$, where $i$ runs over $[1,n+p-1]$.	
	
	Let $\alpha_0:\tb{A}_f\to \tb{A}_R$ be a homeomorphism such that its extension from each boundary $\gamma_f^i$ onto $\gamma_R^i$ is quasisymmetric. Let $\mc{R}|_{\wt{\tb{A}}_R}=R|_{\wt{\tb{A}}_R}$. Then from condition \eqref{eq:com}, the map $\alpha_1:\wt{\tb{A}}_f\to \wt{\tb{A}}_R$ can be defined by the lift:
	\[
	\begin{CD}
	{\wt{\tb{A}}_f}@> {\alpha_1} >> {\wt{\tb{A}}_R}\\
	@ V{f} VV @ VV {\mc{R}}V\\
	{\tb{A}_f}@> {\alpha_0} >> {\tb{A}_R.}\\
	\end{CD}
	\]
	Now we quasiconformally extend $\alpha_0:\partial \tb{H}_f\to\partial\tb{H}_R$ (resp. $\alpha_1:\partial\wt{\tb{H}}_f\to \partial\wt{\tb{H}}_R$) into the interior of $\tb{H}_f$ (resp. $\wt{\tb{H}}_f$). Then the quasi-regular map $\mc{R}:\wt{\tb{H}}_R\to\tb{H}_R$ is induced by the following diagram:
	\[
	\begin{CD}
	{\wt{\tb{H}}_f}@> {\alpha_1} >> {\wt{\tb{H}}_R}\\
	@ V{f} VV @ VV {\mc{R}}V\\
	{\tb{H}_f}@> {\alpha_0} >> {\tb{H}_R}.\\
	\end{CD}
	\]
	We also extend the map $\alpha_0$ over $W_f$ onto $W_R$ such that it is isotopic to the restriction $\alpha_1|_{W_f}$ relative to $B_f$.
	
	We see that the two maps $\alpha_0,\alpha_1:\wt{\tb{W}}_f\to \wt{\tb{W}}_R$ coincide in $\wt{\tb{B}}_f$ and statisfy $\mc{R}\circ \alpha_1=\alpha_0\circ f$ on $\wt{\tb{W}}_f$. However, they may not be isotopic rel. $\wt{\tb{B}}_f$. To fix this problem, in what follows, we will compose suitable twists around the annuli $H_R^i$ to the maps $\mc{R}$ and $\alpha_0$.
	
	For each $1\leq i\leq n+p-1$, let $\beta_i:H_R^i\to\mb{H}_{r_i}:=\{z: r_i<|z|<1\} $ be a conformal parameterization; the twist $T_i:R^i(W_R)\to R^i(W_R)$ is defined by
	\begin{equation}\notag
	T_i(z)=
	\left\{
	\begin{array}{ll}
	z &~~~\tu{ if }z\in R^i(W_R)\setminus H_R^i;\\
	\beta_i^{-1}(re^{\tb{i}(\theta+2\pi\frac{r-r_i}{1-r_i})})  &~~~\tu{ otherwise, }z\in H_R^i\tu{ and assume } \beta_i(z)=re^{\tb{i}\theta}\in\mb{H}_{r_i}.
	\end{array}
	\right.
	\end{equation}
	Then there exists a unique integer $n_i$ such that the map $T_i^{n_i}\circ (\alpha_0|_{R^i(W_R)})$ is isotopic to $\alpha_1|_{R^i(W_R)}$ relative to $R^i(B_f)$. Since the following commutative diagram holds:
	\[
	\begin{CD}
	{f^{i-1}(W_f)}@> {\alpha_1} >> {R^{i-1}(W_R)}\\
	@ V{f} VV @ VV {T_i^{n_i}\circ\mc{R}}V\\
	{f^i(W_f)}@> {T_i^{n_i}\circ \alpha_0} >> {R^i(W_R),}\\
	\end{CD}
	\]
	we may replace the behavior of $\alpha_0$ on $f^i(W_f)$ (resp. $\mc{R}$ on $R^{i-1}(W_R)$) by $T_i^{n_i}\circ \alpha_0$ (resp. $T_i^{n_i}\circ \mc{R}$). The new maps are still denoted by $\alpha_0$ and $\mc{R}$ respectively.
	
	Next we will construct a new quasiconformal structure $\mu$ on $\wh{\mb{C}}$ which is invariant under the quasi-regular map $\mc{R}:\wh{\mb{C}}\to\wh{\mb{C}}$. To do this, start from the standard conformal structure $\mu_0$ on the disks $\mc{R}^{i}(B_R)$ for $i=0,\cdots, n+p-1$, and also on all points of $\wh{\mb{C}}\setminus \mc{U}_R$, which are not in the iterated pre-images of $\wt{\tb{W}}_R$. Now we pull $\mu_0$ to the rest of $\wh{\mb{C}}$ under the action of $\mc{R}$ and its iterates. This will yield a well-defined quasiconformal structure $\mu$ on $\wh{\mb{C}}$, which has bounded dilatation since an orbit can pass through the union of annuli $H_R^i$ at most $n+p-1$ times. Using the measurable Riemann mapping theorem, we obtain a quasiconformal map $h$ sending $\mu$ to $\mu_0$. This implies that the map $R_*=h\circ \mc{R}\circ h^{-1}$ is a rational map.
	
	Finally, let $W_{R_*}=h(W_R)$ and $\eta_i=\alpha_i^{-1}\circ h^{-1}: \tu{orb}(\ol{W_{R_*}})\to \tu{orb}(\ol{W}_{f}), i\in\{0,1\}$. From the constructions above, the maps $R_*, \eta_0, \eta_1$ and $h$ are as required. The proof of the lemma is complete.
\end{proof}

\end{document}